\providecommand{\U}[1]{\protect\rule{.1in}{.1in}}
\providecommand{\U}[1]{\protect\rule{.1in}{.1in}}
\providecommand{\U}[1]{\protect\rule{.1in}{.1in}}
\providecommand{\U}[1]{\protect\rule{.1in}{.1in}}
\providecommand{\U}[1]{\protect\rule{.1in}{.1in}}
\newcommand{\ulambda}{{\boldsymbol{\lambda}}}
\newcommand{\uLambda}{{\boldsymbol{\Lambda}}}
\newcommand{\umu}{{\boldsymbol{\mu}}}
\newtheorem{Th}{Theorem}[subsection]
\numberwithin{equation}{section}
\newtheorem{lemma}[Th]{Lemma}
\newtheorem{Cor}[Th]{Corollary}
\newtheorem{Prop}[Th]{Proposition}
\theoremstyle{remark}
\newtheorem{Rem}[Th]{Remark}{\rmfamily}
\theoremstyle{definition}
{\rmfamily}
\newtheorem{exa}[Th]{Example}{\rmfamily}
\def\C{\mathbb{C}}
\def\Z{\mathbb{Z}}
\newcommand{\Irr}{\operatorname{Irr}}
\def\Irr{{\mathrm{Irr}}}
\def\Q{{\mathbb Q}}
\def\N{{\mathbb N}}
\def\Z{{\mathbb Z}}
\def\C{{\mathbb C}}
\newcommand\blfootnote[1]{%
  \begingroup
  \renewcommand\thefootnote{}\footnote{#1}%
  \addtocounter{footnote}{-1}%
  \endgroup
}
\thanks{Research supported by the Hellenic Foundation for Research and Innovation (H.F.R.I.) under the Basic Research
Financing (Horizontal support for all Sciences), National Recovery and Resilience Plan (Greece 2.0), Project Number:
15659, Project Acronym: SYMATRAL. N.J. is 
 supported by  Agence Nationale de la
Recherche funding ANR CORTIPOM 21-CE40-001. The authors would like to thank Gunter Malle for his useful comments.}
\begin{document}

\title{Generalised hook lengths and Schur elements for Hecke algebras}

\author{Maria Chlouveraki}
\address{National and Kapodistrian University of Athens, Department of Mathematics, 
Panepistimioupolis 15784 Athens, Greece.}
\email{mchlouve@math.uoa.gr}

\author{Jean-Baptiste Gramain}
\address{Institute of Mathematics, University of Aberdeen,
King's College,
Fraser Noble Building,
Aberdeen AB24 3UE,
UK.}
\email{jbgramain@abdn.ac.uk}

\author{Nicolas Jacon}
\address{Universit\'{e} de Reims Champagne-Ardennes, UFR Sciences exactes et
naturelles. Laboratoire de Math\'{e}matiques UMR CNRS 9008. Moulin de la Housse BP
1039. 51100 REIMS. France.}
\email{nicolas.jacon@univ-reims.fr} 

\maketitle
\date{}
\blfootnote{\textup{2020} \textit{Mathematics Subject Classification}: \textup{05E10, 20C08,  20C20}} 
\begin{abstract}
We compare two generalisations of the notion of  hook lengths for partitions.
We apply this in the context of the modular representation theory of Ariki-Koike algebras. We show that the Schur element of a simple module 
 is divisible by the Schur element  of the associated (generalised) core. In the case of Hecke algebras of type $A$, we obtain an even stronger result: the Schur element of a simple module is equal to the product of the Schur element of its core and the Schur element of its quotient.
\end{abstract}

\section{Introduction}

The representation theory of the symmetric group $\mathfrak{S}_n$ over a field of characteristic $e$  and that of its Hecke algebra $\mathcal{H}_n(q)$ when the parameter $q$ is specialised to an $e$-th root of unity are  closely  connected. In particular, they can  both be  nicely described using the combinatorics of partitions and Young tableaux.  

Part of the information on the representations  of these algebras can be obtained by studying a fundamental object: the decomposition matrix. This matrix is a block diagonal matrix whose rows are indexed by the partitions of $n$.
These label the irreducible representations when the symmetric group algebra or the Hecke algebra are semisimple (for example, over $\C$ or when $q$ is an indeterminate, respectively). Then the irreducible representations of the symmetric group can be obtained from the ones of the Hecke algebra by taking $q=1$. Their dimension is given by the famous hook length formula, which states that the dimension of an irreducible representation labelled by a partition $\lambda$ is equal to $n!$ divided by the product of the hook lengths associated with $\lambda$.

 It turns out that the blocks of the decomposition matrix can be described using a classical combinatorial process on partitions.
   One can associate to each partition a pair consisting of a single partition, the $e$-core,  and an $e$-tuple of partitions, the $e$-quotient.    The 
Nakayama Conjecture (proved by Brauer and Robinson \cite{BR}) then asserts  that two partitions label rows with non-zero entries in the same block if and only if they have the same $e$-core. In particular,  if the partition is equal to its $e$-core (in which case we simply say that it is an $e$-core), then the block  is a  $1\times 1 $ identity matrix.

The  Hecke algebra of the symmetric group has a natural generalisation: the  Ariki-Koike algebra, also considered as the Hecke algebra of the complex reflection group $G(l,1,n)$. For $l=1$, this is the symmetric group $\mathfrak{S}_n=A_{n-1}$, and for $l=2$, this is the Weyl group $B_n$. 
The representation theory of Ariki--Koike algebras  has been extensively studied during the past decades and can be developed in the same spirit as in type $A$. In particular, the rows of the associated decomposition matrix are indexed by  the  set of $l$-partitions of $n$, that is, $l$-tuples of partitions whose sizes add up to $n$.  It is then natural, from a representation theoretic point of view but also for the sake of algebraic combinatorics, to ask for a generalisation of the notions of $e$-core, $e$-quotient and hook lengths. We currently know of two independent generalisations of the notion of hook lengths to multipartitions. 
One was suggested by the second author together with Bessenrodt and Olsson in a paper of 2012, \cite{BGO}. Using this, the authors were able to  show that the hook lengths of a partition always contains those of its $e$-core, giving a combinatorial  interpretation of a result by Malle and Navarro \cite[Theorem 9.1]{MN}.
The other generalisation was suggested and used by the first and third authors in another paper of 2012, \cite{CJ1}, in order to describe the Schur elements of Ariki--Koike algebras. It was only after it was reused in the more recent paper \cite{CJ2} that we realised that the two notions, even though different at a first glance, could be compared. This is the first aim of this paper.  Using the language of $l$-symbols, we are able to obtain an injection from the multiset of CJ-hook lengths to the multiset of BGO-hook lengths which preserves the absolute value of the hook length (Proposition \ref{inj}). In the equal charge case, this injection becomes a bijection (Corollary \ref{bij}).

Now, as far as the notion of $e$-core  (and $e$-quotients) is concerned, a first combinatorial generalisation was suggested also in \cite{BGO}, while a second one, adapted to the representation theory of Ariki--Koike algebras, was suggested by the third author and Lecouvey in \cite{JL}. In Section \ref{core}, we give a connection between the two, but it is  weaker than the one provided for generalised hook lengths. However, thanks to this connection, we are able to deduce (Proposition \ref{prop2.3}) that the CJ-hook lengths of the $e$-core of a  multipartion $\ulambda$ are contained in those of $\ulambda$ (or an extension thereof).

In the second part of the paper, we explore the consequences of our results on the representation theory of Ariki--Koike algebras. Ariki--Koike algebras are symmetric algebras, that is, they are endowed with a linear map which is a symmetrising trace; this is also true for group algebras of finite groups.
To each irreducible representation of a symmetric algebra, we can associate an element  of the (integral closure of) the ring over which the algebra is defined, the \emph{Schur element}. In the case of the group algebra of the symmetric group, the  Schur element associated with a partition $\lambda$ is the product of all hook lengths of $\lambda$. In the case of the Ariki--Koike algebra, the  Schur element associated with a multipartition $\ulambda$  is a Laurent polynomial in many indeterminates. There are three different descriptions of the Schur elements of Ariki--Koike algebras \cite{GIM, Mat, CJ1}, the latter of which uses the CJ-hook lengths.

 In this paper, we use our results on generalised hook lengths to prove two theorems on Schur elements. The first one concerns the factorisation of  Schur elements in type $A$. Theorem 
 \ref{mainTypeA} states that, for any $e$, the Schur element associated with a partition $\lambda$ is the product of  the Schur element of its $e$-core and  the Schur element of its $e$-quotient. 
 We obtain thus a connection between the representation theory of the Hecke algebra of type $A$ and that of the cyclotomic Hecke algebra of type $G(e,1,n)$. 
 Our second result, Proposition \ref{unmaintheorem}, concerns more generally the Schur elements of Ariki-Koike algebras: we show a divisibility property which can be regarded as as a generalisation of \cite[Proposition 3.8.3]{CJ2}. This property allows us to compare the Schur element of an arbitrary multipartition with that of its generalised core (in the sense of \cite{JL}). 

Finally, we should mention that many years before \cite{BGO} and \cite{CJ1}, Malle  defined hook lengths and cores for $l$-symbols with specific charges, which he used to obtain a connection between the degrees of a unipotent character and its corresponding cuspidal, in \cite{Mal}. Malle's hook lengths resemble the BGO-hook lengths, but, as it is also mentioned in \cite{BGO}, his are defined for equivalence classes of $l$-symbols and only include non-zero hook lengths. However, 
the results of the second part of our paper should be related to $d$--Howlett--Lehrer theory whenever we are in the setting of a finite group of Lie type or, more generally, a Spets (cf.~\cite{BMM}),
given that unipotent degrees correspond to certain ``spetsial'' specialisations of inverses of Schur elements multiplied by the Poincar\'e polynomial.

\section{Generalised hook lengths}
In this section, we introduce two notions which generalise the notions of hook and hook lengths for partitions to the case of multipartitions (and of their associated symbols). We then investigate the connection between these  objects.

\subsection{Symbols}\label{Symbols} Let $l\in \mathbb{Z}_{>0}$. 
 By definition,  a {\it $\beta$-set} of {\it charge}  $m$  is a collection $X=(a_1,\ldots,a_m)$ of strictly increasing elements in $\mathbb{N}$. 
An {\it $l$-symbol} is a collection of $l$ $\beta$-sets $X=(X_1,\ldots,X_l)$. 
The {\it multicharge} of the symbol is the $l$-tuple $(m_1,\ldots,m_l)$ where, for all $j=1,\ldots,l$, the integer $m_j$ is the charge of the $\beta$-set $X_j$. 
 
\begin{exa}
The 3-symbol $X=((0,2,4,6),(0,3,4),(0,2,5))$ has multicharge $(4,3,3)$.
\end{exa}
An $l$-symbol $X=(X_1,\ldots,X_l)$ can be conveniently represented using its abacus configuration. The $l$ $\beta$-sets $X_1,\ldots, \, X_l$ are represented by $l$ horizontal runners, labelled from bottom to top. Each runner is full of beads, numbered by the natural integers, and a bead numbered by 
$a\in \mathbb{N}=\mathbb{Z}_{\geq 0}$ is coloured black if and only if $a\in X_j$. Thus the  charge of $X_j$ is the number of black beads in the associated runner. For each $1 \leq j \leq l$, we also let $\Gamma_j= \N \setminus X_j$.

  \begin{exa}\label{firstabacus}
  If $l=3$ and $X=((0,2,4,6),(0,3,4),(0,2,5))$, then the abacus representation of $X$ is
\begin{center}
\begin{tikzpicture}[scale=0.5, bb/.style={draw,circle,fill,minimum size=2.5mm,inner sep=0pt,outer sep=0pt}, wb/.style={draw,circle,fill=white,minimum size=2.5mm,inner sep=0pt,outer sep=0pt}]
	
	\node [] at (11,-1) {20};
	\node [] at (10,-1) {19};
	\node [] at (9,-1) {18};
	\node [] at (8,-1) {17};
	\node [] at (7,-1) {16};
	\node [] at (6,-1) {15};
	\node [] at (5,-1) {14};
	\node [] at (4,-1) {13};
	\node [] at (3,-1) {12};
	\node [] at (2,-1) {11};
	\node [] at (1,-1) {10};
	\node [] at (0,-1) {9};
	\node [] at (-1,-1) {8};
	\node [] at (-2,-1) {7};
	\node [] at (-3,-1) {6};
	\node [] at (-4,-1) {5};
	\node [] at (-5,-1) {4};
	\node [] at (-6,-1) {3};
	\node [] at (-7,-1) {2};
	\node [] at (-8,-1) {1};
	\node [] at (-9,-1) {0};

	\node [wb] at (11,0) {};
	\node [wb] at (10,0) {};
	\node [wb] at (9,0) {};
	\node [wb] at (8,0) {};
	\node [wb] at (7,0) {};
	\node [wb] at (6,0) {};
	\node [wb] at (5,0) {};
	\node [wb] at (4,0) {};
	\node [wb] at (3,0) {};
	\node [wb] at (2,0) {};
	\node [wb] at (1,0) {};
	\node [wb] at (0,0) {};
	\node [wb] at (-1,0) {};
	\node [wb] at (-2,0) {};
	\node [bb] at (-3,0) {};
	\node [wb] at (-4,0) {};
	\node [bb] at (-5,0) {};
	\node [wb] at (-6,0) {};
	\node [bb] at (-7,0) {};
	\node [wb] at (-8,0) {};
	\node [bb] at (-9,0) {};
	
	\node [wb] at (11,1) {};
	\node [wb] at (10,1) {};
	\node [wb] at (9,1) {};
	\node [wb] at (8,1) {};
	\node [wb] at (7,1) {};
	\node [wb] at (6,1) {};
	\node [wb] at (5,1) {};
	\node [wb] at (4,1) {};
	\node [wb] at (3,1) {};
	\node [wb] at (2,1) {};
	\node [wb] at (1,1) {};
	\node [wb] at (0,1) {};
	\node [wb] at (-1,1) {};
	\node [wb] at (-2,1) {};
	\node [wb] at (-3,1) {};
	\node [wb] at (-4,1) {};
	\node [bb] at (-5,1) {};
	\node [bb] at (-6,1) {};
	\node [wb] at (-7,1) {};
	\node [wb] at (-8,1) {};
	\node [bb] at (-9,1) {};
	
	\node [wb] at (11,2) {};
	\node [wb] at (10,2) {};
	\node [wb] at (9,2) {};
	\node [wb] at (8,2) {};
	\node [wb] at (7,2) {};
	\node [wb] at (6,2) {};
	\node [wb] at (5,2) {};
	\node [wb] at (4,2) {};
	\node [wb] at (3,2) {};
	\node [wb] at (2,2) {};
	\node [wb] at (1,2) {};
	\node [wb] at (0,2) {};
	\node [wb] at (-1,2) {};
	\node [wb] at (-2,2) {};
	\node [wb] at (-3,2) {};
	\node [bb] at (-4,2) {};
	\node [wb] at (-5,2) {};
	\node [wb] at (-6,2) {};
	\node [bb] at (-7,2) {};
	\node [wb] at (-8,2) {};
	\node [bb] at (-9,2) {};
	\end{tikzpicture}
\end{center}
  \end{exa}

  \subsubsection{Generalised hooks} We have two types of generalisation of the notion of hook. 
A (general) {\it hook} in the $l$-symbol $X$
  is the datum of a quadruple $(a,b,i,j)$ with $a, \, b \in \N$ and $1 \leq i, \, j \leq l$, and where $a\in X_i$ and $b \in \Gamma_j$. 
 \begin{enumerate}
 \item A {\it BGO-hook}  is a hook $(a,b,i,j)$ such that $a \geq b$ and, if $a=b$, then $i>j$.\smallbreak  
 \item A {\it CJ-hook} is a hook $(a,b,i,j)$ such that  
 $$\sharp \{\gamma \in \Gamma_i \,|\,a> \gamma  \} > \sharp  \{\gamma \in \Gamma_j \,|\,b>\gamma \}.$$ 
  \end{enumerate}
BGO-hooks were defined in \cite{BGO}, and CJ-hooks in \cite{CJ1} (with the abacus interpretation given in \cite{CJ2}).

\begin{exa}
Let $X=((0,2,4,6),(0,3,4),(0,2,5))$. Then $(2,2,3,2)$ is a BGO-hook in $X$, but it is not a CJ-hook, whereas $(3,3,2,3)$ is a CJ-hook but not a BGO-hook. 

\end{exa}
 We will denote by \index{$\mathcal{H}^{BGO} (X)$} $\mathcal{H}^{BGO} (X)$ and \index{$\mathcal{H}^{CJ} (X)$}$\mathcal{H}^{CJ} (X)$ the sets of BGO-hooks and CJ-hooks in $X$ respectively. 
When $l=1$, the two notions coincide. Moreover, 
 for all $i=1,\ldots,l$, we have
$(a,b,i,i) \in \mathcal{H}^{BGO} (X)$ if and only if $(a,b,i,i) \in \mathcal{H}^{CJ} (X)$.
We obviously have
$$\mathcal{H}^{BGO} (X) = \bigcup_{1 \leq i<j \leq l} \mathcal{H}^{BGO} ((X_i,X_j)) \quad \text{and}\quad
 \mathcal{H}^{CJ} (X) = \bigcup_{1 \leq i<j \leq l} \mathcal{H}^{CJ}  ((X_i,X_j)).$$

Let now  $i,j \in \{1,\ldots, l\}$.
If $a \in X_i$, then we denote by \index{$ \mathcal{H}^{BGO}_{i,j}(a)$} $ \mathcal{H}^{BGO}_{i,j}(a)$ (resp. \index{$\mathcal{H}^{CJ}_{i,j}(a)$} $\mathcal{H}^{CJ}_{i,j}(a)$) the  set of all BGO-hooks (resp. CJ-hooks) of the form $(a,b,i,j)$. We have
$$\sharp \mathcal{H}^{CJ}_{i,j}(a)  = \sharp  \{\gamma \in \Gamma_i \,|\,a> \gamma  \}.$$
If $i \leq j$, then
$$\sharp \mathcal{H}^{BGO}_{i,j}(a)  = \sharp  \{\gamma \in \Gamma_j \,|\,a> \gamma  \}.$$
If $i>j$, 
then
$$\sharp \mathcal{H}^{BGO}_{i,j}(a)  = \sharp  \{\gamma \in \Gamma_j \,|\,a \geq  \gamma  \}.$$

\begin{Prop}\label{Number}
Let $X=(X_1,\ldots,X_l)$ be an $l$-symbol with multicharge $(m_1,\ldots,m_l)$.  
If $m_i=m_j$ for some $1 \leq i<j \leq l$, then 
$$\sharp \mathcal{H}^{BGO}  ((X_i,X_j))=\sharp \mathcal{H}^{CJ}  ((X_i,X_j)).$$
\end{Prop}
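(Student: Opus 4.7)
My plan is to compute both cardinalities in closed form using the per-element formulas for $\sharp\mathcal{H}^{BGO}_{k,l}(a)$ and $\sharp\mathcal{H}^{CJ}_{k,l}(a)$ recalled just above the statement, and to verify that the hypothesis $m_i=m_j$ is precisely what forces the two totals to coincide.

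Viewing $(X_i,X_j)$ as a $2$-symbol with runners labelled $1,2$, the diagonal contributions coincide since $\mathcal{H}^{BGO}_{k,k}=\mathcal{H}^{CJ}_{k,k}$, so only the cross contributions need to be compared. Using the given formulas, together with the observation that $\sharp\mathcal{H}^{CJ}_{k,l}(a)=\sharp\{\gamma\in\Gamma_k:a>\gamma\}$ depends only on $k$, the required equality reduces to
\[
\sum_{a\in X_i}\sharp\{\gamma\in\Gamma_j:a>\gamma\}+\sum_{a\in X_j}\sharp\{\gamma\in\Gamma_i:a\geq\gamma\}=\sum_{a\in X_i}\sharp\{\gamma\in\Gamma_i:a>\gamma\}+\sum_{a\in X_j}\sharp\{\gamma\in\Gamma_j:a>\gamma\}.
\]

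Next I would rewrite each summand using the elementary identities $\sharp\{\gamma\in\Gamma_S:a>\gamma\}=a-|S\cap[0,a-1]|$ and $\sharp\{\gamma\in\Gamma_S:a\geq\gamma\}=a+1-|S\cap[0,a]|$. The $\sum_a a$ contributions cancel between the two sides, and setting $m:=m_i=m_j$ and using $\sum_{a\in X_k}|X_k\cap[0,a-1]|=\binom{m}{2}$ for $k=i,j$, the identity reduces to
\[
\sum_{a\in X_i}\bigl|X_j\cap[0,a-1]\bigr|+\sum_{a\in X_j}\bigl|X_i\cap[0,a]\bigr|=m+2\binom{m}{2}=m^2.
\]

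Finally, the left-hand side has a transparent combinatorial meaning: its first term counts pairs $(a,b)\in X_i\times X_j$ with $b<a$, while its second term counts pairs with $a\leq b$. These two conditions partition $X_i\times X_j$, yielding the total $|X_i|\cdot|X_j|=m^2$; this is the single point where the equal-charge hypothesis actually enters. The main (minor) obstacle is the careful bookkeeping of the strict vs.\ non-strict inequalities appearing in the BGO count for the two opposite index orderings: the ``$a\geq\gamma$'' in the $(2,1)$ slot generates the asymmetric ``$+m$'' contribution, which the equality $m_i=m_j$ converts into precisely the deficit $m^2-2\binom{m}{2}=m$ on the CJ side.
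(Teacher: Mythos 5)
Your proof is correct, but it proceeds quite differently from the paper's. The paper's argument is an iterative pairing: it selects a matched pair $(a,b)\in X_i\times X_j$ (the extremal elements in a suitable sense), verifies that $\sharp\mathcal{H}^{CJ}_{i,j}(a)+\sharp\mathcal{H}^{CJ}_{j,i}(b)$ and $\sharp\mathcal{H}^{BGO}_{i,j}(a)+\sharp\mathcal{H}^{BGO}_{j,i}(b)$ both equal the same quantity $\sharp\{\gamma\in\Gamma_i\,|\,\min(a,b)>\gamma\}+\sharp\{\gamma\in\Gamma_j\,|\,\min(a,b)>\gamma\}+|a-b|$, then removes that pair and recurses until all of $X_i$ and $X_j$ are exhausted; the hypothesis $m_i=m_j$ is what makes the exhaustion by pairs possible. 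You instead sum the per-element formulas globally and evaluate both totals in closed form, so that the statement collapses to the observation that the conditions $b<a$ and $a\leq b$ partition $X_i\times X_j$ into $m^2$ pairs. Your computation is shorter and self-contained, and it actually gives more: it shows that $\sharp\mathcal{H}^{BGO}((X_i,X_j))-\sharp\mathcal{H}^{CJ}((X_i,X_j))=\tfrac{1}{2}(m_i-m_j)(m_i-m_j-1)$ for arbitrary charges, so the conclusion also holds when $m_i=m_j+1$, something not visible from the paper's argument. What the paper's pairing buys instead is a template for the finer, hook-by-hook correspondence that is subsequently constructed in Proposition~\ref{inj}.
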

\begin{proof}
Let $X_i=(a_1,\ldots,a_m)$ and $X_j=(b_1,\ldots,b_m)$, where $m=m_i=m_j$. 
If $a_m \leq b_m$, let $k \in \{1,\ldots,m\}$ be minimal such that $a_m \leq b_k$, and set $a:=a_m$ and $b:=b_k$.
If $a_m > b_m$, let $k \in \{1,\ldots,m\}$ be minimal such that $a_k > b_m$, and set $a:=a_k$ and $b:=b_m$.
In both cases, we have
$$\begin{array}{rcl}
\sharp  \mathcal{H}^{CJ}_{i,j}(a) + \sharp \mathcal{H}^{CJ}_{j,i}(b) &=& 
\sharp  \{\gamma \in \Gamma_i \,|\,a > \gamma  \} + \sharp  \{\gamma \in \Gamma_j \,|\,b > \gamma  \} \\&=&
\sharp  \{\gamma \in \Gamma_i \,|\,{\rm min}(a,b) > \gamma  \} + \sharp  \{\gamma \in \Gamma_j \,|\,{\rm min}(a,b)  > \gamma  \} +
|a-b|
\end{array}$$
and 
$$\begin{array}{rcl}
\sharp  \mathcal{H}^{BGO}_{i,j}(a) + \sharp \mathcal{H}^{BGO}_{j,i}(b) &=& 
\sharp  \{\gamma \in \Gamma_j \,|\,a> \gamma  \} + \sharp  \{\gamma \in \Gamma_i \,|\,b \geq  \gamma  \}\\&=&
\sharp  \{\gamma \in \Gamma_j \,|\,{\rm min}(a,b)> \gamma  \} +\sharp  \{\gamma \in \Gamma_i \,|\,{\rm min}(a,b)> \gamma  \} +
|a-b|.
\end{array}$$
So it is enough to show that 
$$\sharp \mathcal{H}^{BGO}  ((X_i,X_j)) \setminus \left(\mathcal{H}^{BGO}_{i,j}(a) \cup \mathcal{H}^{BGO}_{j,i}(b)\right)=\sharp \mathcal{H}^{CJ}  ((X_i,X_j))\setminus \left(\mathcal{H}^{CJ}_{i,j}(a) \cup \mathcal{H}^{CJ}_{j,i}(b)\right).$$

We go on in a similar way until we have paired off each element of $X_i$ with an element of $X_j$ so that the total number of BGO-hooks  starting with these two elements is equal to the total number of CJ-hooks starting with these two elements.
   \end{proof}

\begin{Cor}\label{Number2}
Let $X=(X_1,\ldots,X_l)$ be an $l$-symbol with multicharge $(m_1,\ldots,m_l)$. If $m_1=\ldots=m_l$, then 
$$\sharp \mathcal{H}^{BGO} (X)=\sharp \mathcal{H}^{CJ} (X).$$
\end{Cor}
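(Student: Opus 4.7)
The plan is to decompose both hook sets according to which pair of runners appears in each hook, and then reduce to Proposition \ref{Number} pair by pair. I would first partition the hooks of $X$ according to the unordered pair $\{i,j\}$ of runner indices involved:
$$\mathcal{H}^{BGO}(X) = \bigsqcup_{i=1}^{l} D^{BGO}_{i} \;\sqcup\; \bigsqcup_{1 \leq i < j \leq l} O^{BGO}_{i,j},$$
where $D^{BGO}_i$ collects the ``diagonal'' BGO-hooks of the form $(a,b,i,i)$ and $O^{BGO}_{i,j}$ collects the ``off-diagonal'' BGO-hooks of the form $(a,b,i',j')$ with $\{i',j'\}=\{i,j\}$ and $i'\neq j'$. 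The sets $D^{CJ}_i$ and $O^{CJ}_{i,j}$ are defined analogously.

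The diagonal pieces match for free: as already noted in the text just before Proposition \ref{Number}, $(a,b,i,i)\in\mathcal{H}^{BGO}(X)$ if and only if $(a,b,i,i)\in\mathcal{H}^{CJ}(X)$, so $\sharp D^{BGO}_i = \sharp D^{CJ}_i$ for each $i$. For the off-diagonal pieces, fix $i<j$ and look at the 2-symbol $(X_i,X_j)$. Since $m_i=m_j$ by hypothesis, Proposition \ref{Number} applies and gives
$$\sharp\mathcal{H}^{BGO}((X_i,X_j)) = \sharp\mathcal{H}^{CJ}((X_i,X_j)).$$
Inside the 2-symbol $(X_i,X_j)$, the hooks again split into their two diagonal parts plus $O^{BGO}_{i,j}$ (respectively $O^{CJ}_{i,j}$), and the diagonal parts coincide between the two notions by the same remark; subtracting them yields $\sharp O^{BGO}_{i,j} = \sharp O^{CJ}_{i,j}$. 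Summing all these equalities over $i$ and over pairs $i<j$ gives the corollary.

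There is no real obstacle here beyond keeping the bookkeeping straight when one passes from the full symbol $X$ to its 2-subsymbols $(X_i,X_j)$: one has to make sure the diagonals are subtracted consistently on both sides. The hypothesis $m_1=\cdots=m_l$ enters in exactly one place, namely to guarantee that Proposition \ref{Number} is applicable to every pair $(i,j)$ of runners.
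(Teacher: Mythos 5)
Your proof is correct and is essentially the argument the paper intends: the corollary is stated without proof as an immediate consequence of Proposition \ref{Number}, and your decomposition into diagonal pieces (where the two notions coincide) plus off-diagonal pieces for each pair $\{i,j\}$ (handled by Proposition \ref{Number} after subtracting the common diagonals) is exactly the routine bookkeeping being left to the reader.
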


Now the {\it hook length} associated with the hook $(a,b,i,j)$ is \index{$\mathfrak{hl}(a,b,i,j)$} $\mathfrak{hl}(a,b,i,j):=a-b$. As the set of hooks are different in both cases in general, the associated hook lengths will be denoted by  \index{$\mathcal{HL}^{BGO} (X)$} $\mathcal{HL}^{BGO} (X)$ and \index{$\mathcal{HL}^{BGO} (X)$}$\mathcal{HL}^{CJ} (X)$. Note that the elements of the former are all non-negative integers whereas this is not the case in general for the latter.  The following lemma establishes a connection between CJ-hooks of the same length, which will be useful in the proof of the subsequent proposition.

\begin{lemma}\label{sim}
Let $X=(X_1,\ldots,X_l)$ be an $l$-symbol.
Let $h \in \Z$, and let $(a+h,a,i,j)$, $(a'+h,a',i,j)$ be hooks in $X$ with $a<a'$. Assume that, for all $k$ with $a<k<a'$, we have
either $(k+h,k) \in X_i \times X_j$ or $(k+h,k) \in \Gamma_i \times \Gamma_j$. Then  $(a+h,a,i,j)$ is a $CJ$-hook if and only if $(a'+h,a',i,j)$ is a $CJ$-hook.
\end{lemma}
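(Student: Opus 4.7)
The plan is to rephrase the CJ-hook condition as a strict-positivity condition on a certain difference of counting functions, and then compare that difference at the positions $a$ and $a'$ by a telescoping argument tailor-made for the hypothesis.

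For $y\in\N$ and $k\in\{1,\ldots,l\}$, introduce $A(y,k):=\sharp\{\gamma\in\Gamma_k\mid\gamma<y\}$. By the definition of a CJ-hook, whenever $(x+h,x,i,j)$ is a hook it is a CJ-hook precisely when $A(x+h,i)>A(x,j)$; setting $D(x):=A(x+h,i)-A(x,j)$, the lemma reduces to showing $D(a)>0\Leftrightarrow D(a')>0$.

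Next, I would use the elementary identity that $A(y+1,k)-A(y,k)$ equals $1$ if $y\in\Gamma_k$ and $0$ otherwise. Telescoping then yields
$$
D(a')-D(a)\;=\;\sum_{k=a}^{a'-1}\bigl(\chi_{\Gamma_i}(k+h)-\chi_{\Gamma_j}(k)\bigr),
$$
where $\chi_S$ denotes the characteristic function of a set $S$. The hypothesis is designed precisely to annihilate the interior summands: for each $k$ with $a<k<a'$, either $(k+h,k)\in X_i\times X_j$ (both characteristic-function values are $0$) or $(k+h,k)\in\Gamma_i\times\Gamma_j$ (both are $1$), so that $k$th term is zero. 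What is left is a boundary contribution, which the hook conditions $a+h,a'+h\in X_i$ and $a,a'\in\Gamma_j$ pin down explicitly.

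The main obstacle is the bookkeeping at the boundary: once the hypothesis kills all interior terms, one still has to verify that the leftover boundary piece does not cause $D$ to cross the threshold $0$. The hook information at both endpoints, combined with the pattern forced by the hypothesis on the intermediate positions, should be enough to show that $D(a)$ and $D(a')$ end up on the same side of $0$, delivering the biconditional.
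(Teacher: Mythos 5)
Your reduction of the CJ-condition to $D(x)>0$ with $D(x)=A(x+h,i)-A(x,j)$ is correct, and so is your telescoping identity
$$D(a')-D(a)=\sum_{k=a}^{a'-1}\bigl(\chi_{\Gamma_i}(k+h)-\chi_{\Gamma_j}(k)\bigr),$$
with the hypothesis indeed annihilating every interior summand. The gap is that you never evaluate the one surviving boundary term and instead assert that the endpoint hook conditions ``should'' keep $D(a)$ and $D(a')$ on the same side of $0$. They do not: since $(a+h,a,i,j)$ is a hook we have $a+h\in X_i$ and $a\in\Gamma_j$, so the $k=a$ summand is $\chi_{\Gamma_i}(a+h)-\chi_{\Gamma_j}(a)=0-1=-1$, and your identity gives $D(a')=D(a)-1$. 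This yields only one implication ($D(a')>0\Rightarrow D(a)>0$) and leaves open the case $D(a)=1$, $D(a')=0$, where the biconditional fails. That case genuinely occurs: take $l=2$, $h=0$, $X_1=(1,2,4)$, $X_2=(0,2)$, $i=1$, $j=2$, $a=1$, $a'=4$; the hypothesis holds at $k=2$ (both beads) and $k=3$ (both gaps), yet $(1,1,1,2)$ is a CJ-hook while $(4,4,1,2)$ is not.

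For comparison, the paper's proof performs the same count without telescoping: it asserts that both $\sharp\{\gamma\in\Gamma_j\mid a'>\gamma\}-\sharp\{\gamma\in\Gamma_j\mid a>\gamma\}$ and $\sharp\{\gamma\in\Gamma_i\mid a'+h>\gamma\}-\sharp\{\gamma\in\Gamma_i\mid a+h>\gamma\}$ equal $r:=\sharp\{a<k<a'\mid(k+h,k)\in\Gamma_i\times\Gamma_j\}$, whence $D(a')=D(a)$. The first of these two equalities is off by one, because $\gamma=a$ itself lies in $\Gamma_j$ and is counted by $a'>\gamma$ but not by $a>\gamma$; this is exactly the $-1$ your telescoping detects. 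So your method is sound and in fact more careful than the paper's computation; the failure is in stopping before the boundary term, which, once evaluated, shows that the equivalence as stated needs either a modified hypothesis or a weakening to the single implication from $a'$ to $a$.
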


\begin{proof}
Set $r:=\sharp \{a<k<a'\,|\,(k+h,k) \in \Gamma_i \times \Gamma_j\}$.
Then $\sharp \{\gamma \in \Gamma_j \,|\,a'> \gamma  \} = r+ \sharp \{\gamma \in \Gamma_j\,|\,a> \gamma  \}$ 
and $\sharp  \{\gamma \in \Gamma_i \,|\,a'+h>\gamma \}=r+
 \sharp  \{\gamma \in \Gamma_i \,|\,a+h>\gamma \}$. Hence,
 $$\sharp  \{\gamma \in \Gamma_i \,|\,a'+h>\gamma \} - \sharp \{\gamma \in \Gamma_j \,|\,a'> \gamma  \} 
 = \sharp  \{\gamma \in \Gamma_i \,|\,a+h>\gamma \}-\sharp \{\gamma \in \Gamma_j\,|\,a> \gamma  \}.$$ 
\end{proof}

\begin{Prop}\label{inj} Let $X=(X_1,\ldots,X_l)$ be an $l$-symbol. 
There exists an injective map 
$f: \mathcal{H}^{CJ} (X) \rightarrow  \mathcal{H}^{BGO}(X)$ such that, if $f(a,b,i,j)=(a',b',i',j')$,
then $|a-b|=a'-b'$  and $\{i,j\}=\{i',j'\}$. More specifically, if $(a,b,i,j) \notin \mathcal{H}^{BGO}(X)$, then $(a',b',i',j') \notin \mathcal{H}^{CJ} (X)$ and $(i',j')=(j,i)$, while the restriction of $f$ to $\mathcal{H}^{CJ} (X) \cap \mathcal{H}^{BGO}(X)$ is the identity map. 
\end{Prop}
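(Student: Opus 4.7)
The plan is to construct $f$ one diagonal at a time. For $i=j$ the notions of BGO- and CJ-hook coincide, so define $f$ to be the identity on such hooks. For distinct $i,j$ and each $h\ge 0$, every hook between the runners $\{i,j\}$ of absolute length $h$ lies on exactly one of the diagonals
$D_h^{(i,j)}:=\{(a+h,a):a\in\N\}$ or $D_h^{(j,i)}$, with the first coordinate recorded on runner $i$ (resp.\ $j$) and the second on runner $j$ (resp.\ $i$). Each position on $D_h^{(i,j)}$ is one of four types $XX, \Gamma\Gamma, X\Gamma, \Gamma X$; the $X\Gamma$ positions encode length-$h$ hooks $(a+h,a,i,j)$ and the $\Gamma X$ positions encode length-$(-h)$ hooks $(a,a+h,j,i)$.

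Introduce the CJ-score $c(a):=\pi_i(a+h)-\pi_j(a)$, where $\pi_k(x):=\sharp\{\gamma\in\Gamma_k:\gamma<x\}$. A one-line check gives $c(a+1)-c(a)=[a+h\in\Gamma_i]-[a\in\Gamma_j]$, so $c$ is constant on $XX$ and $\Gamma\Gamma$ steps, drops by $1$ at each $X\Gamma$ step, and rises by $1$ at each $\Gamma X$ step. Directly from the definitions, the $X\Gamma$ hook at $a$ is CJ iff $c(a)>0$, while the $\Gamma X$ hook at $a$ is CJ iff $c(a)<0$. For $h>0$ every $X\Gamma$ hook is automatically BGO and no $\Gamma X$ hook is; hence on $D_h^{(i,j)}$ the CJ-not-BGO hooks are exactly the $\Gamma X$ positions with $c(a)<0$, and the BGO-not-CJ hooks are exactly the $X\Gamma$ positions with $c(a)\le 0$. (For $h=0$ the roles of $X\Gamma$ and $\Gamma X$ are swapped according as $i<j$ or $i>j$, but the structure of the argument is unchanged.)

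The injection now follows from a standard lattice-path argument. View $c$ as a walk on $\Z$ starting at $c(0)=\pi_i(h)\ge 0$. For each $k\ge 1$, a $\Gamma X$ CJ-hook at $a$ is an upstep from $c=-k$ to $c=-k+1$, and since the walk began at a non-negative value it must have previously taken a matching downstep from $c=-k+1$ to $c=-k$; that downstep is an $X\Gamma$ position with $c$-value $-k+1\le 0$, hence a BGO-not-CJ hook. Pairing these chronologically at each level $-k$ yields an injection from CJ-not-BGO to BGO-not-CJ hooks on $D_h^{(i,j)}$; running the same argument on $D_h^{(j,i)}$ takes care of the remaining hooks on the pair $\{i,j\}$. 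Combined with the identity on $\mathcal{H}^{CJ}(X)\cap\mathcal{H}^{BGO}(X)$, this assembles into the desired $f$, and the announced conditions $|a-b|=a'-b'$, $\{i,j\}=\{i',j'\}$, and $(i',j')=(j,i)$ on non-BGO sources are automatic from the diagonal construction. The main technical nuisance will be the bookkeeping for the boundary case $h=0$, where the BGO convention depends on the ordering of $i$ and $j$ and where both $X\Gamma$ and $\Gamma X$ positions can carry CJ-not-BGO hooks; this splits into two mirror sub-cases handled by the same walk-matching argument.
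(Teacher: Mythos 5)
Your proof is correct, and it reaches the same matching as the paper's via the same decomposition (fix an unordered pair of runners and an absolute length $h$, and pair each negative-length CJ-hook with a positive-length BGO-hook on the same diagonal), but the technical engine is genuinely different and arguably cleaner. The paper proceeds greedily: for the smallest CJ source $a_1$ it locates the largest suitable $\gamma\in\Gamma_i$ below $a_1$ with $\gamma+h\in X_j$, proves the resulting BGO-hook is not CJ by a counting argument resting on Lemma \ref{sim}, then repeats for $a_2,a_3,\ldots$ after excluding the used targets --- the later iterations are only sketched (``using the same argument as before''). Your score function $c(a)=\pi_i(a+h)-\pi_j(a)$ turns the whole diagonal into a lattice walk in which CJ-not-BGO hooks are upsteps departing from negative levels and BGO-not-CJ hooks are downsteps departing from non-positive levels; the ballot-style crossing argument (every upstep from level $-k$ is preceded by an unmatched downstep into level $-k$, and distinct levels give disjoint sets of downsteps) then produces the injection for all sources simultaneously, making both the existence of targets and the injectivity transparent and dispensing with the analogue of Lemma \ref{sim}. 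The only points needing care in a full write-up are the ones you flag: for $h=0$ each unordered pair $\{i,j\}$ should be assigned only one of the two diagonals $D_0^{(i,j)}$, $D_0^{(j,i)}$ to avoid double-counting, and on that single diagonal (say with $i<j$) the CJ-not-BGO hooks sit only at the $X\Gamma$ positions (the $\Gamma X$ positions carry BGO-hooks since $j>i$), so the mirror of the crossing argument, started from $c(0)=0$, applies verbatim. The announced properties $|a-b|=a'-b'$, $\{i,j\}=\{i',j'\}$ and $(i',j')=(j,i)$ on non-BGO sources are indeed automatic from the construction.
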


\begin{proof}
First of all, if $(a,b,i,j) \in \mathcal{H}^{CJ} (X) \cap \mathcal{H}^{BGO}(X)$, then we set $f(a,b,i,j):=(a,b,i,j)$. Therefore, it is enough to show that there exists an injective map $$f: \mathcal{H}^{CJ} (X) \setminus  \mathcal{H}^{BGO}(X) \rightarrow  \mathcal{H}^{BGO}(X) \setminus  \mathcal{H}^{CJ}(X)$$ such that,  
if $f(a,b,i,j)=(a',b',i',j')$, 
then $|a-b|=b-a=a'-b'$ and $(i',j')=(j,i)$.  Note that a CJ-hook $(a,b,i,j)$ is not a BGO-hook if and only if $a-b<0$ or $a=b$ and $i<j$.

If $i,j \in \{1,\ldots,l\}$ with $i=j$, then $(a,b,i,j) \in \mathcal{H}^{CJ} (X) \cap \mathcal{H}^{BGO}(X)$, so there is nothing to do.

Now let $i,j \in \{1,\ldots,l\}$ with $i<j$. Let
$h \in \Z_{\geq 0}$ and let $a_1 < \cdots < a_r$ be all the elements of $X_i$ such that 
$(a_k,a_k+h,i,j) \in \mathcal{H}^{CJ}(X)$ ($1 \leq k \leq r$). These hooks have length $-h$, and so $(a_k,a_k+h,i,j) \notin \mathcal{H}^{BGO}(X)$ for $1 \leq k \leq r$.

We start with $(a_1,a_1+h,i,j)$. By definition, 
$$\sharp \{\gamma \in \Gamma_i \,|\,a_1> \gamma  \} > \sharp \{\gamma \in \Gamma_j \,|\,a_1+h> \gamma  \} \geq 0.$$
Let $c_1> \cdots > c_s$ be the elements of  $\{\gamma \in \Gamma_i \,|\,a_1> \gamma  \}$. 
If $c_1+h \in \Gamma_j$, then
$$\sharp \{\gamma \in \Gamma_i \,|\,a_1> \gamma  \} > \sharp \{\gamma \in \Gamma_j \,|\,a_1+h> \gamma  \} \geq 1,$$
and so $s >1$.  Now, if $c_2+h \in \Gamma_j$, then
$$\sharp \{\gamma \in \Gamma_i \,|\,a_1> \gamma  \} > \sharp \{\gamma \in \Gamma_j \,|\,a_1+h> \gamma  \} \geq 2,$$
and so $s >2$, and so on. Since $s$ is finite, this procedure must stop, and so there exists $t \in \{1,\ldots,s\}$ minimal with respect to the property $c_t+h \in X_j$. Then $(c_t+h,c_t,j,i) \in \mathcal{H}^{BGO}(X)$ and has hook length equal to $h$. 

We will now show that $(c_t+h,c_t,j,i) \notin \mathcal{H}^{CJ}(X)$. If not, then, by definition, 
$$\sharp \{\gamma \in \Gamma_j \,|\,c_t+h> \gamma  \} > \sharp \{\gamma \in \Gamma_i \,|\,c_t> \gamma  \} \geq 0.$$
 Set $u:= \sharp \{\gamma \in \Gamma_j \,|\,c_t+h> \gamma  \} $. 
  We have $u > s-t$. By Lemma \ref{sim} and because of the minimality of $a_1$, we have that 
  $\sharp \{\gamma \in \Gamma_j \,|\,a_1+h> \gamma>c_t+h  \}=t-1$. Hence,
  $\sharp \{\gamma \in \Gamma_j \,|\,a_1+h> \gamma\}=t-1+u$. Since $s > \sharp \{\gamma \in \Gamma_j \,|\,a_1+h> \gamma\}$, we deduce that $s-t > u-1$, which yields a contradiction. Therefore, $(c_t+h,c_t,j,i) \notin \mathcal{H}^{CJ}(X)$ and we set $f(a_1,a_1+h,i,j):=(c_t+h,c_t,j,i)$. 
  
  We now move on to $(a_2,a_2+h,i,j)$. By definition, 
$$\sharp \{\gamma \in \Gamma_i \,|\,a_2> \gamma  \} > \sharp \{\gamma \in \Gamma_j \,|\,a_2+h> \gamma  \} >
\sharp \{\gamma \in \Gamma_j \,|\,a_1+h> \gamma  \}  \geq 0.$$
Let $d_1> \cdots > d_v$ be the elements of  $\{\gamma \in \Gamma_i \,|\,a_2> \gamma  \} \setminus \{c_t\}$. 
  We have $v>0$. Using the same argument as before, if we take $w \in \{1,\ldots,v\}$ minimal with respect to the property $d_w+h \in X_j$, then $(d_w+h,d_w,j,i) \in \mathcal{H}^{BGO}(X) \setminus \mathcal{H}^{CJ}(X) $ and 
  we set $f(a_2,a_2+h,i,j):=(d_w+h,d_w,j,i)$.
  
  We continue like this for $a_3,\ldots,a_r$. 
  
Now let $i,j \in \{1,\ldots,l\}$ with $i>j$. Let
$h \in \Z_{> 0}$ and let $b_1 < \cdots < b_m$ be all the elements of $X_i$ such that 
$(b_k,b_k+h,i,j) \in \mathcal{H}^{CJ}(X)$ ($1 \leq k \leq m$). We define $f$ for these elements exactly as in the previous case.

Because of the way we defined $f$, it is clearly injective. 
\end{proof}

\begin{Cor}\label{bij}
Let $X=(X_1,\ldots,X_l)$ be an $l$-symbol  with multicharge $(m_1,\ldots,m_l)$. If $m_1=\ldots=m_l$, then the map $f$ is a bijection.
\end{Cor}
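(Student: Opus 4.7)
The plan is essentially to combine the two results that immediately precede the statement. Proposition \ref{inj} gives an injective map $f:\mathcal{H}^{CJ}(X)\to\mathcal{H}^{BGO}(X)$ regardless of the multicharge. Corollary \ref{Number2} asserts that, under the hypothesis $m_1=\cdots=m_l$, the sets $\mathcal{H}^{CJ}(X)$ and $\mathcal{H}^{BGO}(X)$ have the same (finite) cardinality. Since an injection between two finite sets of equal cardinality is automatically a bijection, the conclusion follows at once.

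So the proof I would write has just two lines: first invoke Proposition \ref{inj} to produce $f$ and recall it is injective; then invoke Corollary \ref{Number2} to obtain $\sharp\mathcal{H}^{CJ}(X)=\sharp\mathcal{H}^{BGO}(X)<\infty$; then conclude by the standard pigeonhole argument that $f$ is surjective, hence bijective.

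There is no real obstacle here, and in particular no need to re-examine the combinatorial construction of $f$: the equal-charge hypothesis is exactly what is needed to match the cardinalities, and the length- and index-preservation properties of $f$ come for free from its construction in Proposition \ref{inj}. One might, as a sanity check, verify that the finiteness of $\mathcal{H}^{BGO}(X)$ (and hence of $\mathcal{H}^{CJ}(X)$) follows from the fact that only finitely many beads in each runner differ from the initial configuration, so that both sets of hooks are finite — but this is already implicit in the previous results.
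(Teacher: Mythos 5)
Your proposal is exactly the paper's proof: the authors also deduce the corollary by combining the injectivity of $f$ from Proposition \ref{inj} with the equality of cardinalities from Corollary \ref{Number2}, and conclude since an injection between finite sets of equal size is a bijection. Your remark on finiteness is a harmless extra sanity check that the paper leaves implicit.
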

\begin{proof}
The result follows from Corollary \ref{Number2} and Proposition \ref{inj}. 
\end{proof}

  \begin{exa}\label{firstabacus} The map $f$ is not bijective in general. 
  Let $l=2$ and $X=((0,5),(0,1,4)).$
\begin{center}
\begin{tikzpicture}[scale=0.5, bb/.style={draw,circle,fill,minimum size=2.5mm,inner sep=0pt,outer sep=0pt}, wb/.style={draw,circle,fill=white,minimum size=2.5mm,inner sep=0pt,outer sep=0pt}]
	
	\node [] at (11,-1) {20};
	\node [] at (10,-1) {19};
	\node [] at (9,-1) {18};
	\node [] at (8,-1) {17};
	\node [] at (7,-1) {16};
	\node [] at (6,-1) {15};
	\node [] at (5,-1) {14};
	\node [] at (4,-1) {13};
	\node [] at (3,-1) {12};
	\node [] at (2,-1) {11};
	\node [] at (1,-1) {10};
	\node [] at (0,-1) {9};
	\node [] at (-1,-1) {8};
	\node [] at (-2,-1) {7};
	\node [] at (-3,-1) {6};
	\node [] at (-4,-1) {5};
	\node [] at (-5,-1) {4};
	\node [] at (-6,-1) {3};
	\node [] at (-7,-1) {2};
	\node [] at (-8,-1) {1};
	\node [] at (-9,-1) {0};

	\node [wb] at (11,0) {};
	\node [wb] at (10,0) {};
	\node [wb] at (9,0) {};
	\node [wb] at (8,0) {};
	\node [wb] at (7,0) {};
	\node [wb] at (6,0) {};
	\node [wb] at (5,0) {};
	\node [wb] at (4,0) {};
	\node [wb] at (3,0) {};
	\node [wb] at (2,0) {};
	\node [wb] at (1,0) {};
	\node [wb] at (0,0) {};
	\node [wb] at (-1,0) {};
	\node [wb] at (-2,0) {};
	\node [wb] at (-3,0) {};
	\node [bb] at (-4,0) {};
	\node [wb] at (-5,0) {};
	\node [wb] at (-6,0) {};
	\node [wb] at (-7,0) {};
	\node [wb] at (-8,0) {};
	\node [bb] at (-9,0) {};
	
	\node [wb] at (11,1) {};
	\node [wb] at (10,1) {};
	\node [wb] at (9,1) {};
	\node [wb] at (8,1) {};
	\node [wb] at (7,1) {};
	\node [wb] at (6,1) {};
	\node [wb] at (5,1) {};
	\node [wb] at (4,1) {};
	\node [wb] at (3,1) {};
	\node [wb] at (2,1) {};
	\node [wb] at (1,1) {};
	\node [wb] at (0,1) {};
	\node [wb] at (-1,1) {};
	\node [wb] at (-2,1) {};
	\node [wb] at (-3,1) {};
	\node [wb] at (-4,1) {};
	\node [bb] at (-5,1) {};
	\node [wb] at (-6,1) {};
	\node [wb] at (-7,1) {};
	\node [bb] at (-8,1) {};
	\node [bb] at (-9,1) {};
	
	\end{tikzpicture}
\end{center}
Then $\mathcal{HL}^{BGO} (X)=\{1,1,2,2,3,4 \,|\,0,0,1,2,3,2,3\}$ and $\mathcal{HL}^{CJ} (X)=\{1,1,2,2,3,4 \,|\,-1,0,2,3,2,3\}$ (from now on,  for the convenience of the reader, we separate by a ``|'' the hook lengths associated with hooks in the same component from the hook lengths associated with hooks in different components).
  \end{exa}

      \begin{exa}\label{firstabacus} The map $f$  can be bijective even when the charges are unequal. 
  Let $l=2$ and $X=((0,1,2,5),(0,3,4)).$
\begin{center}
\begin{tikzpicture}[scale=0.5, bb/.style={draw,circle,fill,minimum size=2.5mm,inner sep=0pt,outer sep=0pt}, wb/.style={draw,circle,fill=white,minimum size=2.5mm,inner sep=0pt,outer sep=0pt}]
	
	\node [] at (11,-1) {20};
	\node [] at (10,-1) {19};
	\node [] at (9,-1) {18};
	\node [] at (8,-1) {17};
	\node [] at (7,-1) {16};
	\node [] at (6,-1) {15};
	\node [] at (5,-1) {14};
	\node [] at (4,-1) {13};
	\node [] at (3,-1) {12};
	\node [] at (2,-1) {11};
	\node [] at (1,-1) {10};
	\node [] at (0,-1) {9};
	\node [] at (-1,-1) {8};
	\node [] at (-2,-1) {7};
	\node [] at (-3,-1) {6};
	\node [] at (-4,-1) {5};
	\node [] at (-5,-1) {4};
	\node [] at (-6,-1) {3};
	\node [] at (-7,-1) {2};
	\node [] at (-8,-1) {1};
	\node [] at (-9,-1) {0};

	\node [wb] at (11,0) {};
	\node [wb] at (10,0) {};
	\node [wb] at (9,0) {};
	\node [wb] at (8,0) {};
	\node [wb] at (7,0) {};
	\node [wb] at (6,0) {};
	\node [wb] at (5,0) {};
	\node [wb] at (4,0) {};
	\node [wb] at (3,0) {};
	\node [wb] at (2,0) {};
	\node [wb] at (1,0) {};
	\node [wb] at (0,0) {};
	\node [wb] at (-1,0) {};
	\node [wb] at (-2,0) {};
	\node [wb] at (-3,0) {};
	\node [bb] at (-4,0) {};
	\node [wb] at (-5,0) {};
	\node [wb] at (-6,0) {};
	\node [bb] at (-7,0) {};
	\node [bb] at (-8,0) {};
	\node [bb] at (-9,0) {};
	
	\node [wb] at (11,1) {};
	\node [wb] at (10,1) {};
	\node [wb] at (9,1) {};
	\node [wb] at (8,1) {};
	\node [wb] at (7,1) {};
	\node [wb] at (6,1) {};
	\node [wb] at (5,1) {};
	\node [wb] at (4,1) {};
	\node [wb] at (3,1) {};
	\node [wb] at (2,1) {};
	\node [wb] at (1,1) {};
	\node [wb] at (0,1) {};
	\node [wb] at (-1,1) {};
	\node [wb] at (-2,1) {};
	\node [wb] at (-3,1) {};
	\node [wb] at (-4,1) {};
	\node [bb] at (-5,1) {};
	\node [bb] at (-6,1) {};
	\node [wb] at (-7,1) {};
	\node [wb] at (-8,1) {};
	\node [bb] at (-9,1) {};
	
	\end{tikzpicture}
\end{center}
Then $\mathcal{HL}^{BGO} (X)=\{1,1,2,2,2,3\,|\,
0,0,1,1,3,4\}$ and $\mathcal{HL}^{CJ} (X)=\{1,1,2,2,2,3\,|\,-1,0,0,1,3,4\}$.
  \end{exa}

\subsubsection{Scaled hooks}

Let $k \in \mathbb{Z}_{>0}$.
If $X=(a_1,\ldots,a_m)$ is a $\beta$-set, then we denote by \index{$kX$ (where $X$ is a $\beta$-set or $l$-symbol)} $kX$ the $\beta$-set $(ka_1,\ldots,ka_m)$, and if $X=(X_1,\ldots,X_l)$ is an $l$-symbol, then we denote by $kX$ the $l$-symbol $(kX_1,\ldots,kX_l)$. 

Obviously, $(a,b,i,j)$ is a hook of $X$ if and only if $(ka,kb,i,j)$ is a hook of $kX$. Moreover, $(a,b,i,j)$ is a BGO-hook of $X$ if and only if $(ka,kb,i,j)$ is a BGO-hook of $kX$. The same does not hold for CJ-hooks in general (even though, by using the definition of CJ-hooks, we can show that it is true if $a \geq b$):

\begin{exa}\label{veryfirstabacus} 
  Let $l=2$ and $X=((0,2,3),(0,1,2)).$
\begin{center}
\begin{tikzpicture}[scale=0.5, bb/.style={draw,circle,fill,minimum size=2.5mm,inner sep=0pt,outer sep=0pt}, wb/.style={draw,circle,fill=white,minimum size=2.5mm,inner sep=0pt,outer sep=0pt}]
	
	\node [] at (11,-1) {20};
	\node [] at (10,-1) {19};
	\node [] at (9,-1) {18};
	\node [] at (8,-1) {17};
	\node [] at (7,-1) {16};
	\node [] at (6,-1) {15};
	\node [] at (5,-1) {14};
	\node [] at (4,-1) {13};
	\node [] at (3,-1) {12};
	\node [] at (2,-1) {11};
	\node [] at (1,-1) {10};
	\node [] at (0,-1) {9};
	\node [] at (-1,-1) {8};
	\node [] at (-2,-1) {7};
	\node [] at (-3,-1) {6};
	\node [] at (-4,-1) {5};
	\node [] at (-5,-1) {4};
	\node [] at (-6,-1) {3};
	\node [] at (-7,-1) {2};
	\node [] at (-8,-1) {1};
	\node [] at (-9,-1) {0};

	\node [wb] at (11,0) {};
	\node [wb] at (10,0) {};
	\node [wb] at (9,0) {};
	\node [wb] at (8,0) {};
	\node [wb] at (7,0) {};
	\node [wb] at (6,0) {};
	\node [wb] at (5,0) {};
	\node [wb] at (4,0) {};
	\node [wb] at (3,0) {};
	\node [wb] at (2,0) {};
	\node [wb] at (1,0) {};
	\node [wb] at (0,0) {};
	\node [wb] at (-1,0) {};
	\node [wb] at (-2,0) {};
	\node [wb] at (-3,0) {};
	\node [wb] at (-4,0) {};
	\node [wb] at (-5,0) {};
	\node [bb] at (-6,0) {};
	\node [bb] at (-7,0) {};
	\node [wb] at (-8,0) {};
	\node [bb] at (-9,0) {};
	
	\node [wb] at (11,1) {};
	\node [wb] at (10,1) {};
	\node [wb] at (9,1) {};
	\node [wb] at (8,1) {};
	\node [wb] at (7,1) {};
	\node [wb] at (6,1) {};
	\node [wb] at (5,1) {};
	\node [wb] at (4,1) {};
	\node [wb] at (3,1) {};
	\node [wb] at (2,1) {};
	\node [wb] at (1,1) {};
	\node [wb] at (0,1) {};
	\node [wb] at (-1,1) {};
	\node [wb] at (-2,1) {};
	\node [wb] at (-3,1) {};
	\node [wb] at (-4,1) {};
	\node [wb] at (-5,1) {};
	\node [wb] at (-6,1) {};
	\node [bb] at (-7,1) {};
	\node [bb] at (-8,1) {};
	\node [bb] at (-9,1) {};
	
	\end{tikzpicture}
\end{center}
Then $\mathcal{HL}^{BGO} (X)=\{1,2 \,|\,0,1\}$ and $\mathcal{HL}^{CJ} (X)=\{1,2 \,|\,0,-1\}$. 
More specifically, $(3,3,1,2)$ and $(2,3,1,2)$ are CJ-hooks.
Now, for $k=2$, we have $kX=((0,4,6),(0,2,4))$.
\begin{center}
\begin{tikzpicture}[scale=0.5, bb/.style={draw,circle,fill,minimum size=2.5mm,inner sep=0pt,outer sep=0pt}, wb/.style={draw,circle,fill=white,minimum size=2.5mm,inner sep=0pt,outer sep=0pt}]
	
	\node [] at (11,-1) {20};
	\node [] at (10,-1) {19};
	\node [] at (9,-1) {18};
	\node [] at (8,-1) {17};
	\node [] at (7,-1) {16};
	\node [] at (6,-1) {15};
	\node [] at (5,-1) {14};
	\node [] at (4,-1) {13};
	\node [] at (3,-1) {12};
	\node [] at (2,-1) {11};
	\node [] at (1,-1) {10};
	\node [] at (0,-1) {9};
	\node [] at (-1,-1) {8};
	\node [] at (-2,-1) {7};
	\node [] at (-3,-1) {6};
	\node [] at (-4,-1) {5};
	\node [] at (-5,-1) {4};
	\node [] at (-6,-1) {3};
	\node [] at (-7,-1) {2};
	\node [] at (-8,-1) {1};
	\node [] at (-9,-1) {0};

	\node [wb] at (11,0) {};
	\node [wb] at (10,0) {};
	\node [wb] at (9,0) {};
	\node [wb] at (8,0) {};
	\node [wb] at (7,0) {};
	\node [wb] at (6,0) {};
	\node [wb] at (5,0) {};
	\node [wb] at (4,0) {};
	\node [wb] at (3,0) {};
	\node [wb] at (2,0) {};
	\node [wb] at (1,0) {};
	\node [wb] at (0,0) {};
	\node [wb] at (-1,0) {};
	\node [wb] at (-2,0) {};
	\node [bb] at (-3,0) {};
	\node [wb] at (-4,0) {};
	\node [bb] at (-5,0) {};
	\node [wb] at (-6,0) {};
	\node [wb] at (-7,0) {};
	\node [wb] at (-8,0) {};
	\node [bb] at (-9,0) {};
	
	\node [wb] at (11,1) {};
	\node [wb] at (10,1) {};
	\node [wb] at (9,1) {};
	\node [wb] at (8,1) {};
	\node [wb] at (7,1) {};
	\node [wb] at (6,1) {};
	\node [wb] at (5,1) {};
	\node [wb] at (4,1) {};
	\node [wb] at (3,1) {};
	\node [wb] at (2,1) {};
	\node [wb] at (1,1) {};
	\node [wb] at (0,1) {};
	\node [wb] at (-1,1) {};
	\node [wb] at (-2,1) {};
	\node [wb] at (-3,1) {};
	\node [wb] at (-4,1) {};
	\node [bb] at (-5,1) {};
	\node [wb] at (-6,1) {};
	\node [bb] at (-7,1) {};
	\node [wb] at (-8,1) {};
	\node [bb] at (-9,1) {};
	
	\end{tikzpicture}
\end{center}
We have $\mathcal{HL}^{BGO} (kX)=\{1,1,1,1,2,3,3,3,4,5 \,|\,1,3,1,3,5,0,1,1,2,3\}$ and $\mathcal{HL}^{CJ} (kX)=\{1,1,1,1,2,3,3,3,4,5 \,|\,$ $-1,1,3,0,1,3,5,1,2,3\}$. 
Here, $(6,6,1,2)$ is a CJ-hook, but $(4,6,1,2)$ is not. However, we now have a new CJ-hook with hook length equal to $2$, the hook $(4,2,2,1)$. This is to be expected by Corollary \ref{bij}.
  \end{exa}

Set \index{$\mathcal{H}_k^*(X)$  } $\mathcal{H}_k^*(X):=\{h \in \mathcal{H}^*(kX) \,|\, \mathfrak{hl}(h) \equiv 0 \,{\rm mod}\,k\}$ where $* \in \{BGO,CJ\}$ (for $k=1$,  $\mathcal{H}_1^*(X)= \mathcal{H}^*(X)$). By definition of $kX$, the elements of $\mathcal{H}_k^*(X)$ are of the form $(ka,kb,i,j)$ for some $(a,b) \in X_i \times \Gamma_j$.
Following the discussion in the beginning of the subsection we have
\begin{equation}\label{maybe}
 \mathcal{H}^{BGO}_k(X) = \{(ka,kb,i,j)\,|\,(a,b,i,j) \in \mathcal{H}^{BGO}(X)\}.
 \end{equation}
Therefore, there exists a bijection
\begin{equation}\label{midbij}
 \mathcal{H}^{BGO}(X) \longrightarrow \mathcal{H}^{BGO}_k(X) , (a,b,i,j) \longmapsto (ka,kb,i,j).
 \end{equation}
 
Now, if $X=(X_1,\ldots,X_l)$ is an $l$-symbol  with multicharge $(m_1,\ldots,m_l)$, and $m_1=\ldots=m_l$, then Corollary
\ref{bij} yields the following bijections (since the map $f$ preserves the absolute value of hook lengths):
\begin{equation}\label{notmidbij2}
\mathcal{H}^{CJ}(X) \longleftrightarrow  \mathcal{H}^{BGO}(X)
\quad \text{ and } \quad \mathcal{H}^{BGO}_k(X) \longleftrightarrow  \mathcal{H}^{CJ}_k(X).
\end{equation}
Combining these with \eqref{midbij}, we obtain a bijection
\begin{equation}\label{notmidbij}
 \mathcal{H}^{CJ}(X) \longrightarrow  \mathcal{H}^{CJ}_k(X), (a,b,i,j) \longmapsto (ka',kb',i',j')
 \end{equation}
 such that $|a-b|=|a'-b'|$ and $\{i,j\}=\{i',j'\}$. At the level of the corresponding hook lengths, we have bijections:
 \begin{equation}\label{schl}
 \begin{array}{ccccccc}
  \mathcal{HL}^{CJ}(X) &\longrightarrow& \mathcal{HL}^{BGO}(X) &\longrightarrow&  \mathcal{HL}^{BGO}_k(X)&
  \longrightarrow& \mathcal{HL}^{CJ}_k(X)\\
  x &\longmapsto &|x| &\longmapsto &k|x| &\longmapsto& kx \,\text{ or } -kx.
\end{array}
\end{equation}

\begin{exa}
In Example \ref{veryfirstabacus} above, we have
$\mathcal{HL}_2^{BGO} (X)=\{2,4 \,|\,0,2\}=\mathcal{HL}_2^{CJ} (X)$. 
\end{exa}

\subsubsection{Charged hooks} 
Let $s \in \N$. If $X=(a_1,\ldots,a_m)$ is a $\beta$-set, then we set \index{$X[s]$}  $X[s]:=(0,1,\ldots,s-1,$ $a_1+s,\ldots,a_m+s)$ if $s \neq 0$, and $X[0]:=X$.
Let ${\bf s}=(s_1,\ldots,s_l) \in \mathbb{N}^l$. 
If $X=(X_1,\ldots,X_l)$ is an $l$-symbol, then we denote by \index{$X[\textbf{s}]$}. $X[\textbf{s}]$ the $l$-symbol $(X_1[s_1],\ldots,X_l[s_l])$. 
Obviously, $(a,b,i,j)$ is a hook of $X$ if and only if $(a+s_i,b+s_j,i,j)$ is a hook of $X[\textbf{s}]$. 
This in fact characterizes all the hooks of $X[\textbf{s}]$. Moreover, we obviously have
that $(a,b,i,j)$ is a CJ-hook of $X$ if and only if $(a+s_i,b+s_j,i,j)$ is a CJ-hook of $X[\textbf{s}]$, because the translation does not affect the number of empty spots in the abacus. This is of course not the case for BGO-hooks:

\begin{exa}\label{firstabacus} 
  Let $l=2$ and $X=((0,2,3),(0,1,2))$.
\begin{center}
\begin{tikzpicture}[scale=0.5, bb/.style={draw,circle,fill,minimum size=2.5mm,inner sep=0pt,outer sep=0pt}, wb/.style={draw,circle,fill=white,minimum size=2.5mm,inner sep=0pt,outer sep=0pt}]
	
	\node [] at (11,-1) {20};
	\node [] at (10,-1) {19};
	\node [] at (9,-1) {18};
	\node [] at (8,-1) {17};
	\node [] at (7,-1) {16};
	\node [] at (6,-1) {15};
	\node [] at (5,-1) {14};
	\node [] at (4,-1) {13};
	\node [] at (3,-1) {12};
	\node [] at (2,-1) {11};
	\node [] at (1,-1) {10};
	\node [] at (0,-1) {9};
	\node [] at (-1,-1) {8};
	\node [] at (-2,-1) {7};
	\node [] at (-3,-1) {6};
	\node [] at (-4,-1) {5};
	\node [] at (-5,-1) {4};
	\node [] at (-6,-1) {3};
	\node [] at (-7,-1) {2};
	\node [] at (-8,-1) {1};
	\node [] at (-9,-1) {0};

	\node [wb] at (11,0) {};
	\node [wb] at (10,0) {};
	\node [wb] at (9,0) {};
	\node [wb] at (8,0) {};
	\node [wb] at (7,0) {};
	\node [wb] at (6,0) {};
	\node [wb] at (5,0) {};
	\node [wb] at (4,0) {};
	\node [wb] at (3,0) {};
	\node [wb] at (2,0) {};
	\node [wb] at (1,0) {};
	\node [wb] at (0,0) {};
	\node [wb] at (-1,0) {};
	\node [wb] at (-2,0) {};
	\node [wb] at (-3,0) {};
	\node [wb] at (-4,0) {};
	\node [wb] at (-5,0) {};
	\node [bb] at (-6,0) {};
	\node [bb] at (-7,0) {};
	\node [wb] at (-8,0) {};
	\node [bb] at (-9,0) {};
	
	\node [wb] at (11,1) {};
	\node [wb] at (10,1) {};
	\node [wb] at (9,1) {};
	\node [wb] at (8,1) {};
	\node [wb] at (7,1) {};
	\node [wb] at (6,1) {};
	\node [wb] at (5,1) {};
	\node [wb] at (4,1) {};
	\node [wb] at (3,1) {};
	\node [wb] at (2,1) {};
	\node [wb] at (1,1) {};
	\node [wb] at (0,1) {};
	\node [wb] at (-1,1) {};
	\node [wb] at (-2,1) {};
	\node [wb] at (-3,1) {};
	\node [wb] at (-4,1) {};
	\node [wb] at (-5,1) {};
	\node [wb] at (-6,1) {};
	\node [bb] at (-7,1) {};
	\node [bb] at (-8,1) {};
	\node [bb] at (-9,1) {};
	
	\end{tikzpicture}
\end{center}
Then $\mathcal{HL}^{CJ} (X)=\{1,2 \,|\,0,-1\}$ and $\mathcal{HL}^{BGO} (X)=\{1,2 \,|\,0,1\}$. 
Now, for ${\bf s}=(1,4)$, we have $X[\textbf{s}]=((0,1,3,4),(0,1,2,3,4,5,6))$.
\begin{center}
\begin{tikzpicture}[scale=0.5, bb/.style={draw,circle,fill,minimum size=2.5mm,inner sep=0pt,outer sep=0pt}, wb/.style={draw,circle,fill=white,minimum size=2.5mm,inner sep=0pt,outer sep=0pt}]
	
	\node [] at (11,-1) {20};
	\node [] at (10,-1) {19};
	\node [] at (9,-1) {18};
	\node [] at (8,-1) {17};
	\node [] at (7,-1) {16};
	\node [] at (6,-1) {15};
	\node [] at (5,-1) {14};
	\node [] at (4,-1) {13};
	\node [] at (3,-1) {12};
	\node [] at (2,-1) {11};
	\node [] at (1,-1) {10};
	\node [] at (0,-1) {9};
	\node [] at (-1,-1) {8};
	\node [] at (-2,-1) {7};
	\node [] at (-3,-1) {6};
	\node [] at (-4,-1) {5};
	\node [] at (-5,-1) {4};
	\node [] at (-6,-1) {3};
	\node [] at (-7,-1) {2};
	\node [] at (-8,-1) {1};
	\node [] at (-9,-1) {0};

	\node [wb] at (11,0) {};
	\node [wb] at (10,0) {};
	\node [wb] at (9,0) {};
	\node [wb] at (8,0) {};
	\node [wb] at (7,0) {};
	\node [wb] at (6,0) {};
	\node [wb] at (5,0) {};
	\node [wb] at (4,0) {};
	\node [wb] at (3,0) {};
	\node [wb] at (2,0) {};
	\node [wb] at (1,0) {};
	\node [wb] at (0,0) {};
	\node [wb] at (-1,0) {};
	\node [wb] at (-2,0) {};
	\node [wb] at (-3,0) {};
	\node [wb] at (-4,0) {};
	\node [bb] at (-5,0) {};
	\node [bb] at (-6,0) {};
	\node [wb] at (-7,0) {};
	\node [bb] at (-8,0) {};
	\node [bb] at (-9,0) {};
	
	\node [wb] at (11,1) {};
	\node [wb] at (10,1) {};
	\node [wb] at (9,1) {};
	\node [wb] at (8,1) {};
	\node [wb] at (7,1) {};
	\node [wb] at (6,1) {};
	\node [wb] at (5,1) {};
	\node [wb] at (4,1) {};
	\node [wb] at (3,1) {};
	\node [wb] at (2,1) {};
	\node [wb] at (1,1) {};
	\node [wb] at (0,1) {};
	\node [wb] at (-1,1) {};
	\node [wb] at (-2,1) {};
	\node [bb] at (-3,1) {};
	\node [bb] at (-4,1) {};
	\node [bb] at (-5,1) {};
	\node [bb] at (-6,1) {};
	\node [bb] at (-7,1) {};
	\node [bb] at (-8,1) {};
	\node [bb] at (-9,1) {};
	
	\end{tikzpicture}
\end{center}
We have $\mathcal{HL}^{CJ} (X[\textbf{s}])=\{1,2 \,|\,-3,-4\}$ and $\mathcal{HL}^{BGO} (X[\textbf{s}])=\{1,2 \,|\,0,0,0,1,3,4\}$. 
  \end{exa}

Therefore, there exists a bijection
\begin{equation}\label{CJbij}
 \mathcal{H}^{CJ}(X) \longrightarrow  \mathcal{H}^{CJ}(X[\textbf{s}]), (a,b,i,j) \longmapsto (a+s_i,b+s_j,i,j),
 \end{equation}
 which, at the level of hook lengths, yields a bijection
 \begin{equation}\label{CJhlbij}
 \mathcal{HL}^{CJ}(X) \longrightarrow  \mathcal{HL}^{CJ}(X[\textbf{s}]), \mathfrak{hl}(a,b,i,j) \longmapsto \mathfrak{hl}(a,b,i,j)+s_i-s_j.
 \end{equation}

 \subsubsection{Charged scaled hooks}
 Let $k \in \mathbb{Z}_{>0}$ and ${\bf s} \in \mathbb{N}^l$.
 Let $X=(X_1,\ldots,X_l)$ be an $l$-symbol  with multicharge $(m_1,\ldots,m_l)$ and $m_1=\ldots=m_l$.
 Set 
 $$\mathcal{H}^{*}_{k,\textbf{s}}(X):=\{(a,b,i,j) \in \mathcal{H}^{*}((kX)[{\bf s}]) \,|\, (a-s_i)-(b-s_j) \equiv 0 \,{\rm mod}\,k\},$$
 where  $* \in \{BGO,CJ\}$. We have
 $\mathcal{H}^{*}_{1,\textbf{0}}(X)=\mathcal{H}^{*}(X)$, where $\textbf{0}=(0,\ldots,0) \in \N^l$.
 Combining \eqref{midbij}, \eqref{notmidbij2}, \eqref{notmidbij} and \eqref{CJbij}, we deduce that we have a bijection
 \begin{equation}
  \mathcal{H}^{CJ}_{k,\textbf{s}}(X) \longleftrightarrow \mathcal{H}^{BGO}_{1,\textbf{0}}(X).
  \end{equation}
In particular, at the level of hook lengths, this induces a bijection
\begin{equation}\label{CJvsBGO}
\mathcal{HL}^{CJ}_{k,\textbf{s}}(X) \longrightarrow \mathcal{HL}^{BGO}_{1,\textbf{0}}(X)[k;{\bf s}], \quad x \longmapsto \pm x,
\end{equation}
where $\mathcal{HL}^{*}_{1,\textbf{0}}(X)[k;{\bf s}] := \{k(a-b)+s_i-s_j\,|\,(a,b,i,j) \in \mathcal{H}^{*}_{1,\textbf{0}}(X)\}$.

\begin{exa}\label{firstabacus} 
  Let $l=2$ and $X=((0,2,3),(0,1,2))$ as before. We have $\mathcal{HL}^{BGO}_{1,\textbf{0}}(X)=\{1,2 \,|\,0,1\}$.
  For $k=2$ and ${\bf s}=(1,4)$, we have $\mathcal{HL}^{BGO}_{1,\textbf{0}}(X)[k;{\bf s}]=
\{2,4\,|\,3,5\}$ and $\mathcal{HL}^{CJ}_{1,\textbf{0}}(X)[k;{\bf s}]=
\{2,4\,|\,-3,-5\}$.
Moreover,  we have $(kX)[{\bf s}]=((0,1,5,7),(0,1,2,3,4,6,8))$.
\begin{center}
\begin{tikzpicture}[scale=0.5, bb/.style={draw,circle,fill,minimum size=2.5mm,inner sep=0pt,outer sep=0pt}, wb/.style={draw,circle,fill=white,minimum size=2.5mm,inner sep=0pt,outer sep=0pt}]
	
	\node [] at (11,-1) {20};
	\node [] at (10,-1) {19};
	\node [] at (9,-1) {18};
	\node [] at (8,-1) {17};
	\node [] at (7,-1) {16};
	\node [] at (6,-1) {15};
	\node [] at (5,-1) {14};
	\node [] at (4,-1) {13};
	\node [] at (3,-1) {12};
	\node [] at (2,-1) {11};
	\node [] at (1,-1) {10};
	\node [] at (0,-1) {9};
	\node [] at (-1,-1) {8};
	\node [] at (-2,-1) {7};
	\node [] at (-3,-1) {6};
	\node [] at (-4,-1) {5};
	\node [] at (-5,-1) {4};
	\node [] at (-6,-1) {3};
	\node [] at (-7,-1) {2};
	\node [] at (-8,-1) {1};
	\node [] at (-9,-1) {0};

	\node [wb] at (11,0) {};
	\node [wb] at (10,0) {};
	\node [wb] at (9,0) {};
	\node [wb] at (8,0) {};
	\node [wb] at (7,0) {};
	\node [wb] at (6,0) {};
	\node [wb] at (5,0) {};
	\node [wb] at (4,0) {};
	\node [wb] at (3,0) {};
	\node [wb] at (2,0) {};
	\node [wb] at (1,0) {};
	\node [wb] at (0,0) {};
	\node [wb] at (-1,0) {};
	\node [bb] at (-2,0) {};
	\node [wb] at (-3,0) {};
	\node [bb] at (-4,0) {};
	\node [wb] at (-5,0) {};
	\node [wb] at (-6,0) {};
	\node [wb] at (-7,0) {};
	\node [bb] at (-8,0) {};
	\node [bb] at (-9,0) {};
	
	\node [wb] at (11,1) {};
	\node [wb] at (10,1) {};
	\node [wb] at (9,1) {};
	\node [wb] at (8,1) {};
	\node [wb] at (7,1) {};
	\node [wb] at (6,1) {};
	\node [wb] at (5,1) {};
	\node [wb] at (4,1) {};
	\node [wb] at (3,1) {};
	\node [wb] at (2,1) {};
	\node [wb] at (1,1) {};
	\node [wb] at (0,1) {};
	\node [bb] at (-1,1) {};
	\node [wb] at (-2,1) {};
	\node [bb] at (-3,1) {};
	\node [wb] at (-4,1) {};
	\node [bb] at (-5,1) {};
	\node [bb] at (-6,1) {};
	\node [bb] at (-7,1) {};
	\node [bb] at (-8,1) {};
	\node [bb] at (-9,1) {};

	\end{tikzpicture}
\end{center}
Then $\mathcal{H}^{CJ}_{k,\textbf{s}}(X)=\{(5,3,1,1),(7,3,1,1),(7,10,1,2), (8,3,2,1)\}$,
and so
$ \mathcal{HL}^{CJ}_{k,\textbf{s}}(X) =\{2,4 \,|\,-3,5\}$.
  \end{exa}

\subsection{Multipartitions}

A {\it partition} of (rank) $n$ is  a sequence of integers $(\lambda_1,\ldots,\lambda_r)$ such that $\lambda_1\geq \ldots \geq \lambda_r \geq 0$ and such that 
$\sum_{1\leq i\leq r} \lambda_i =n$. We consider that adding $0$'s to the partition does not change the partition.
  To each  $\beta$-set $X=(a_1,\ldots,a_m)$ we can canonically associate 
  a partition \index{$\Lambda (X$) (where $X$ is a $\beta$-set} $\Lambda (X)=(\lambda_1,\ldots,\lambda_m)$ such that, for all $i=1,\ldots,m$, we have $\lambda_i=a_{m-i+1}+i-m$. In the abacus configuration of $X$, $\lambda_i$ equals the number of empty spots on the left of $a_{m-i+1}$. 
  
        \begin{exa} If  $X=(0,3,4,6,8)$,  then we have $\Lambda (X)=(4,3,2,2)$.
        \end{exa}
  
  Conversely, to any partition $\lambda=(\lambda_1,\ldots,\lambda_r)$, we can associate a $\beta$-set $X(\lambda)=(a_1,\ldots,a_m)$,
  where $m:=\operatorname{min}\{i>0 \ |\ \lambda_i=0\}$ and such that, for all $i=1,\ldots,m$, we have $a_{m-i+1}=\lambda_i-i+m$.  Now, for any $s \in \N$, we have that $\Lambda(X(\lambda)[s])=\lambda$.

Now, let $l \in  \mathbb{Z}_{>0}$. An {\it $l$-partition} (or multipartition) of $n$  is an $l$-tuple of partitions $(\lambda^1,\ldots, \lambda^l)$ such that the sum of the ranks of the 
 $\lambda^j$'s is $n$. To each $l$-symbol $X=(X_1,\ldots,X_l)$, we can associate a multipartition \index{$\uLambda (X)$ (where $X$ is an $l$-symbol)}  $\uLambda (X)=(\Lambda (X_1),\ldots,\Lambda (X_l))$ together with a multicharge ${\bf s} (X)\in \mathbb{N}^l$ which is the multicharge of the symbol.
         \begin{exa} Let  $X=((0,1,2,5),(0,3,4))$ then we have  $\uLambda (X)=((2),(2,2))$ and ${\bf s} (X)=(4,3)$. 
        \end{exa}
 
Conversely, to any $l$-partition $\ulambda=(\lambda^1,\ldots, \lambda^l)$, we can associate an $l$-symbol
$X(\ulambda)=(X(\lambda^1),\ldots, X(\lambda^l))$. Everything we proved in \S\ref{Symbols} about hooks and hook lengths applies to $X(\ulambda)$. As before, for each ${\bf s} \in \N^l$, we have $\uLambda(X(\ulambda)[{\bf s}])=\ulambda$.  For the empty $l$-partition $\boldsymbol{\emptyset}$, we will take $X(\boldsymbol{\emptyset})$ to be empty.

 If now $(m_1,\ldots,m_l)$ is the multicharge of $X(\ulambda)$, set $m:={\rm max}(m_i)$ and
${\bf m}:=(m-m_1,\ldots,m-m_l)$. The $l$-symbol $X(\ulambda)[{\bf m}]$ represents the $l$-partition $\ulambda$ and has multicharge $(m,m,\ldots,m)$. It will be denoted by \index{$X^{1,\boldsymbol{0}}(\ulambda)$ (where $\ulambda$ is an $l$-partition)}$X^{1,\boldsymbol{0}}(\ulambda)$. We thus have
$X^{1,\boldsymbol{0}}(\ulambda)_j = (0,1,\ldots,m-m_j-1,\lambda^j_{m_j}-m_j+m,\ldots,\lambda^j_2-2+m,\lambda^j_1-1+m)$ for all $j=1,\ldots,l$. Equivalently, $X^{1,\boldsymbol{0}}(\ulambda)_j =(a_1^j,a_2^j,\ldots,a_m^j)$ where $a_{m-i+1}^j=\lambda_i^j-i+m$. Generalising this notation, we will denote by \index{$X^{k,{\bf s}}(\ulambda)$ (where $\ulambda$ is an $l$-partition)} $X^{k,{\bf s}}(\ulambda)$ the $l$-symbol $(kX^{1,\boldsymbol{0}}(\ulambda))[{\bf s}]$ for any $k \in \mathbb{Z}_{>0}$ and ${\bf s} \in \mathbb{N}^l$.

 \subsection{Cores and quotients}\label{core}
Fix $e\in \mathbb{Z}_{>0}$. Let $n\in \mathbb{N}$ and let $\lambda$ be a partition of $n$.  Let $X(\lambda)$ denote the corresponding set of $\beta$-numbers. We can now associate to $\lambda$ an $e$-symbol
 $Y(\lambda)=(Y_1,\ldots,Y_e)$
  where, for all $j\in \{1,\ldots,e\}$, $Y_j$ is the  set of increasing integers obtained by ordering the set
  $$\{ k\in \mathbb{N}\ |\ j-1+ke \in X(\lambda)\}.$$
  The abacus configuration of $Y(\lambda)$ is the \emph{$e$-abacus presentation} of $\lambda$.

Let $\ulambda$ be the $e$-partition such that $X (\ulambda)=Y(\lambda)$. Then $\ulambda$ is the {\it $e$-quotient} of $\lambda$. If now ${\bf s}=(s_1,\ldots,s_e)$ is the multicharge of $Y(\lambda)$, then the \emph{$e$-core} of $\lambda$ is the partition $\lambda^{\circ}$ such that $Y(\lambda^{\circ})=X(\boldsymbol{\emptyset})[{\bf s}]$, where $\boldsymbol{\emptyset}$ denotes the empty $e$-partition (this amounts to pushing all beads of $Y(\lambda)$ to the left).

  \begin{exa}
  Let $\lambda=(3,2,1,1,1)$ and $e=3$. Then  $X(\lambda)=(0,2,3,4,6,8)$ and thus
  $$Y(\lambda)=((0,1,2),(1),(0,2)).$$
  Thus, the $3$-quotient of $\lambda$ is $(\emptyset, (1),(1))$. The multicharge of $Y(\lambda)$ is ${\bf s}=(3,1,2)$  and 
  so 
  the $3$-core is the partition $\lambda^{\circ}$ such that 
  $$Y(\lambda^{\circ})=((0,1,2),(1),(0,1)),$$
  that is, $\lambda^{\circ}=(1,1)$.
  \end{exa}

  Now, the multiset $\mathcal{HL} (\lambda)$ of hook lengths of a partition $\lambda$ is the multiset $\mathcal{HL}^{BGO} (X (\lambda))$.
By \cite[Theorem 4.4]{BGO}, we have:
\begin{Prop} Let $\tilde{\bf s}=(es_1,es_2+1,\ldots,es_e+(e-1))$. As multisets, we have
$$\mathcal{HL} ( \lambda)=\mathcal{HL} (\lambda^{\circ})\cup |\mathcal{HL}^{BGO}_{1,{\bf 0}} (X^{1,\boldsymbol{0}}(\ulambda))[e;\tilde{\bf s}]|,$$
where $|\cdot|$ means that we  take the absolute value of each element in the multiset.
\end{Prop}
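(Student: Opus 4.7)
Plan: The statement is a direct restatement of~\cite[Theorem 4.4]{BGO} in the notation of Section~\ref{Symbols}; the task is therefore to transport the combinatorial output of that theorem --- phrased naturally in terms of BGO-hooks of the $e$-abacus $Y(\lambda)$ with its multicharge $(s_1,\ldots,s_e)$ --- to the equal-multicharge symbol $X^{1,\boldsymbol{0}}(\ulambda)$ combined with the scaled-shifted operator $[e;\tilde{\bf s}]$.

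First I would use the canonical bijection $n\leftrightarrow (i,k)$ with $n=(i-1)+ek$ to rewrite BGO-hooks of the 1-symbol $X(\lambda)$ as quadruples $(k,k',i,j)$ with $k\in Y_i$ and $k'$ in the complement of $Y_j$ in $\mathbb{N}$, carrying hook length $e(k-k')+(i-j)>0$. Splitting by residue, the $e$-divisible hook lengths of $\lambda$ are those with $i=j$, and by the classical $e$-quotient correspondence they form the multiset $e\cdot\bigsqcup_i\mathcal{HL}(\lambda^i)$ where $\ulambda=(\lambda^1,\ldots,\lambda^e)$. The non-$e$-divisible ones are, by~\cite[Theorem 4.4]{BGO}, precisely $\mathcal{HL}(\lambda^\circ)$ together with the absolute values of $e(k-k')+(i-j)$ ranging over the between-runner BGO-hooks of $Y(\lambda)$.

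Second I would translate both pieces into the right-hand side. The within-runner BGO-hooks $(a,b,i,i)$ of $X^{1,\boldsymbol{0}}(\ulambda)$ are exactly the BGO-hooks of its components $X^{1,\boldsymbol{0}}(\ulambda)_i$, hence correspond bijectively to the hooks of $\lambda^i$; since $\tilde{s}_i-\tilde{s}_i=0$, the operator $[e;\tilde{\bf s}]$ multiplies their lengths by $e$, matching the $e$-divisible part. For the between-runner piece, I would compare $Y(\lambda)$ with $X^{1,\boldsymbol{0}}(\ulambda)$: they represent the same $\ulambda$ but with different multicharges. Routing through CJ-hooks --- which transform cleanly under charge shifts by~\eqref{CJbij} --- and invoking Corollary~\ref{bij} (which, because the multicharge of $X^{1,\boldsymbol{0}}(\ulambda)$ is uniform, gives an absolute-value-preserving bijection between its CJ- and BGO-hooks) yields the identification. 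A direct calculation then confirms that the choice $\tilde{s}_i=es_i+(i-1)$ is precisely what is needed for $|e(a-b)+\tilde{s}_i-\tilde{s}_j|$ on $X^{1,\boldsymbol{0}}(\ulambda)$ to equal $|e(k-k')+(i-j)|$ on the corresponding hook of $Y(\lambda)$.

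The main obstacle is this last translation. Since BGO-hooks do not transform cleanly under charge shifts, no direct BGO-hook correspondence between $Y(\lambda)$ and $X^{1,\boldsymbol{0}}(\ulambda)$ is available; the detour through CJ-hooks together with Corollary~\ref{bij} is what makes the identification go through. The form of $\tilde{\bf s}$ is then forced by the computation: the summand $es_i$ absorbs the $e$-scaled charge difference between the two symbols, while the residue offset $(i-1)$ records the shift inherent in writing $a=(i-1)+ek$ when converting between the $e$-abacus and the 1-symbol $X(\lambda)$.
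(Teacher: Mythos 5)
The paper itself offers no proof of this Proposition beyond the citation of \cite[Theorem 4.4]{BGO}, so your attempt to supply the translation into the notation of Section \ref{Symbols} is reasonable, and your second step (the detour through CJ-hooks via \eqref{CJbij} and Corollary \ref{bij}, and the check that $\tilde{s}_i=es_i+(i-1)$ absorbs both the $e$-scaled charge difference and the residue offset) is sound as far as it goes. However, your first step misstates the decomposition, and the error is not cosmetic. Writing $a=(i-1)+ek$, the hooks $(a,b)$ of $\lambda$ with $a>b$ correspond bijectively to \emph{all} BGO-hooks $(k,k',i,j)$ of $Y(\lambda)$, with hook length exactly $e(k-k')+(i-j)$ (automatically positive); hence the non-$e$-divisible hook lengths of $\lambda$ are \emph{exactly} the multiset of $e(k-k')+(i-j)$ over the between-runner BGO-hooks of $Y(\lambda)$ --- nothing more. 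Declaring them to be ``$\mathcal{HL}(\lambda^\circ)$ together with'' that multiset overcounts by $|\lambda^\circ|$ elements. In the paper's running example $\lambda=(3,2,1,1,1)$, $e=3$: the non-divisible hook lengths are $\{1,1,1,2,5,7\}$, which is already the full multiset of weighted between-runner BGO-hook lengths of $Y(\lambda)=((0,1,2),(1),(0,2))$, whereas your step 1 would produce $\{1,2\}\cup\{1,1,1,2,5,7\}$, which has eight elements instead of six.

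As a consequence, your argument never accounts for where the summand $\mathcal{HL}(\lambda^\circ)$ in the final formula actually comes from. The real content of the statement is that the between-runner multiset $e(k-k')+(i-j)$ over $\mathcal{H}^{BGO}(Y(\lambda))$ itself splits as $\mathcal{HL}(\lambda^\circ)$ union the cross-runner part of $|\mathcal{HL}^{BGO}_{1,{\bf 0}}(X^{1,\boldsymbol{0}}(\ulambda))[e;\tilde{\bf s}]|$. Your CJ-detour produces the second piece: the CJ-hooks of $Y(\lambda)$ match, via \eqref{CJbij} and Corollary \ref{bij}, the BGO-hooks of $X^{1,\boldsymbol{0}}(\ulambda)$ with the correct absolute weighted lengths. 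But Proposition \ref{inj} only gives an \emph{injection} $f:\mathcal{H}^{CJ}(Y(\lambda))\to\mathcal{H}^{BGO}(Y(\lambda))$, and you still must identify the $|\lambda^\circ|$ BGO-hooks of $Y(\lambda)$ lying outside the image of $f$ with the BGO-hooks of the core's abacus $Y(\lambda^\circ)$ (the empty-multipartition symbol with the same multicharge), weights included. That identification is precisely the weighted form of \cite[Theorem 3.3]{BGO}, quoted just before Proposition \ref{propositionabove}, and it is the one genuinely non-trivial step; it cannot be obtained by attributing to Theorem 4.4 a statement about $Y(\lambda)$ that, by the count above, cannot be what that theorem asserts.
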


Using \eqref{CJvsBGO}, we deduce:
 \begin{Cor}\label{particore}
  Let $\tilde{\bf s}=(es_1,es_2+1,\ldots,es_e+(e-1))$. As multisets, we have
$$\mathcal{HL} ( \lambda)=\mathcal{HL} (\lambda^{\circ})\cup 
|\mathcal{HL}^{CJ}_{e,\tilde{\bf s}} (X^{1,\boldsymbol{0}}(\ulambda)) |,$$
where $|\cdot|$ means that we  take the absolute value of each element in the multiset.
\end{Cor}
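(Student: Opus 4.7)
The proof should be essentially a one-step reduction to the previous proposition combined with the bijection \eqref{CJvsBGO}. The plan is as follows.

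First, I would recall the preceding proposition, which expresses $\mathcal{HL}(\lambda)$ as the disjoint union of $\mathcal{HL}(\lambda^\circ)$ with the multiset of absolute values of $\mathcal{HL}^{BGO}_{1,\boldsymbol 0}(X^{1,\boldsymbol 0}(\ulambda))[e;\tilde{\bf s}]$. So the only thing left to show is the equality of multisets
\[
\bigl|\mathcal{HL}^{BGO}_{1,\boldsymbol 0}(X^{1,\boldsymbol 0}(\ulambda))[e;\tilde{\bf s}]\bigr| \;=\; \bigl|\mathcal{HL}^{CJ}_{e,\tilde{\bf s}}(X^{1,\boldsymbol 0}(\ulambda))\bigr|.
\]

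Next, I would invoke \eqref{CJvsBGO} with $X = X^{1,\boldsymbol 0}(\ulambda)$, $k=e$ and ${\bf s}=\tilde{\bf s}$. The key point to verify before applying this is the hypothesis that $X$ has equal multicharge: by the very construction given at the end of the multipartition subsection, $X^{1,\boldsymbol 0}(\ulambda)$ has multicharge $(m,m,\ldots,m)$ where $m = \max(m_1,\ldots,m_e)$, so the hypothesis is automatic. Then \eqref{CJvsBGO} provides a bijection $\mathcal{HL}^{CJ}_{e,\tilde{\bf s}}(X^{1,\boldsymbol 0}(\ulambda)) \to \mathcal{HL}^{BGO}_{1,\boldsymbol 0}(X^{1,\boldsymbol 0}(\ulambda))[e;\tilde{\bf s}]$ given by $x \mapsto \pm x$. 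Taking absolute values on both sides of this bijection kills the sign ambiguity and yields precisely the desired equality of multisets.

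Finally, substituting into the formula from the preceding proposition gives the corollary. There is no real obstacle here; the work was already done in \eqref{CJvsBGO}, which itself was obtained by combining the charge-invariance of CJ-hooks \eqref{CJbij}, the scaling bijection \eqref{midbij}, and the equal-charge bijection of Corollary~\ref{bij}. The only thing that requires a moment's thought is checking that the equal-charge hypothesis is satisfied by $X^{1,\boldsymbol 0}(\ulambda)$, which is true by design.
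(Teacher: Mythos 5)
Your proposal is correct and follows exactly the paper's route: the paper derives the corollary from the preceding proposition by a one-line appeal to \eqref{CJvsBGO}, which is precisely the reduction you carry out. Your additional check that $X^{1,\boldsymbol 0}(\ulambda)$ has equal multicharge (so that \eqref{CJvsBGO} applies) is a detail the paper leaves implicit, and it is verified correctly.
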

 A consequence of the above result  in terms of Schur elements will be given in the next section.

   \begin{exa}\label{coreexample}
   In the previous example of $\lambda=(3,2,1,1,1)$, we have $\mathcal{HL} ( \lambda)=\{1,1,1,2,3,3,5,7\}$ and
   $\mathcal{HL} (\lambda^{\circ})=\{1,2\}.$ 
   Moreover, ${\bf m}=(0,2,1)$ and $\tilde{\bf s}=(9,4,8)$.
   The abacus configuration for $X^{1,{\bf 0}}(\ulambda)$ is:
   \begin{center}
   \begin{tikzpicture}[scale=0.5, bb/.style={draw,circle,fill,minimum size=2.5mm,inner sep=0pt,outer sep=0pt}, wb/.style={draw,circle,fill=white,minimum size=2.5mm,inner sep=0pt,outer sep=0pt}]
	
	\node [] at (11,-1) {20};
	\node [] at (10,-1) {19};
	\node [] at (9,-1) {18};
	\node [] at (8,-1) {17};
	\node [] at (7,-1) {16};
	\node [] at (6,-1) {15};
	\node [] at (5,-1) {14};
	\node [] at (4,-1) {13};
	\node [] at (3,-1) {12};
	\node [] at (2,-1) {11};
	\node [] at (1,-1) {10};
	\node [] at (0,-1) {9};
	\node [] at (-1,-1) {8};
	\node [] at (-2,-1) {7};
	\node [] at (-3,-1) {6};
	\node [] at (-4,-1) {5};
	\node [] at (-5,-1) {4};
	\node [] at (-6,-1) {3};
	\node [] at (-7,-1) {2};
	\node [] at (-8,-1) {1};
	\node [] at (-9,-1) {0};

	\node [wb] at (11,0) {};
	\node [wb] at (10,0) {};
	\node [wb] at (9,0) {};
	\node [wb] at (8,0) {};
	\node [wb] at (7,0) {};
	\node [wb] at (6,0) {};
	\node [wb] at (5,0) {};
	\node [wb] at (4,0) {};
	\node [wb] at (3,0) {};
	\node [wb] at (2,0) {};
	\node [wb] at (1,0) {};
	\node [wb] at (0,0) {};
	\node [wb] at (-1,0) {};
	\node [wb] at (-2,0) {};
	\node [wb] at (-3,0) {};
	\node [wb] at (-4,0) {};
	\node [wb] at (-5,0) {};
	\node [wb] at (-6,0) {};
	\node [bb] at (-7,0) {};
	\node [bb] at (-8,0) {};
	\node [bb] at (-9,0) {};
	
	\node [wb] at (11,1) {};
	\node [wb] at (10,1) {};
	\node [wb] at (9,1) {};
	\node [wb] at (8,1) {};
	\node [wb] at (7,1) {};
	\node [wb] at (6,1) {};
	\node [wb] at (5,1) {};
	\node [wb] at (4,1) {};
	\node [wb] at (3,1) {};
	\node [wb] at (2,1) {};
	\node [wb] at (1,1) {};
	\node [wb] at (0,1) {};
	\node [wb] at (-1,1) {};
	\node [wb] at (-2,1) {};
	\node [wb] at (-3,1) {};
	\node [wb] at (-4,1) {};
	\node [wb] at (-5,1) {};
	\node [bb] at (-6,1) {};
	\node [wb] at (-7,1) {};
	\node [bb] at (-8,1) {};
	\node [bb] at (-9,1) {};
	
	\node [wb] at (11,2) {};
	\node [wb] at (10,2) {};
	\node [wb] at (9,2) {};
	\node [wb] at (8,2) {};
	\node [wb] at (7,2) {};
	\node [wb] at (6,2) {};
	\node [wb] at (5,2) {};
	\node [wb] at (4,2) {};
	\node [wb] at (3,2) {};
	\node [wb] at (2,2) {};
	\node [wb] at (1,2) {};
	\node [wb] at (0,2) {};
	\node [wb] at (-1,2) {};
	\node [wb] at (-2,2) {};
	\node [wb] at (-3,2) {};
	\node [wb] at (-4,2) {};
	\node [wb] at (-5,2) {};
	\node [bb] at (-6,2) {};
	\node [wb] at (-7,2) {};
	\node [bb] at (-8,2) {};
	\node [bb] at (-9,2) {};
	
	\end{tikzpicture}
\end{center}
 Then  $\mathcal{H}^{BGO}_{1,{\bf 0}} (\ulambda)=\{(3,2,2,2),(3,3,2,1),(3,2,2,3),(3,2,3,3),(3,3,3,1),(3,2,3,2)\}$, and so  $\mathcal{HL}^{BGO}_{1,{\bf 0}} (\ulambda)[e;\tilde{\bf s}]=\{3,-5,-1,3,-1,7\}$. On the other hand,
 $X^{3,\tilde{\bf s}} (\ulambda)$ has abacus configuration

  \begin{center}
   \begin{tikzpicture}[scale=0.5, bb/.style={draw,circle,fill,minimum size=2.5mm,inner sep=0pt,outer sep=0pt}, wb/.style={draw,circle,fill=white,minimum size=2.5mm,inner sep=0pt,outer sep=0pt}]
	
	\node [] at (11,-1) {20};
	\node [] at (10,-1) {19};
	\node [] at (9,-1) {18};
	\node [] at (8,-1) {17};
	\node [] at (7,-1) {16};
	\node [] at (6,-1) {15};
	\node [] at (5,-1) {14};
	\node [] at (4,-1) {13};
	\node [] at (3,-1) {12};
	\node [] at (2,-1) {11};
	\node [] at (1,-1) {10};
	\node [] at (0,-1) {9};
	\node [] at (-1,-1) {8};
	\node [] at (-2,-1) {7};
	\node [] at (-3,-1) {6};
	\node [] at (-4,-1) {5};
	\node [] at (-5,-1) {4};
	\node [] at (-6,-1) {3};
	\node [] at (-7,-1) {2};
	\node [] at (-8,-1) {1};
	\node [] at (-9,-1) {0};

	\node [wb] at (11,0) {};
	\node [wb] at (10,0) {};
	\node [wb] at (9,0) {};
	\node [wb] at (8,0) {};
	\node [wb] at (7,0) {};
	\node [bb] at (6,0) {};
	\node [wb] at (5,0) {};
	\node [wb] at (4,0) {};
	\node [bb] at (3,0) {};
	\node [wb] at (2,0) {};
	\node [wb] at (1,0) {};
	\node [bb] at (0,0) {};
	\node [bb] at (-1,0) {};
	\node [bb] at (-2,0) {};
	\node [bb] at (-3,0) {};
	\node [bb] at (-4,0) {};
	\node [bb] at (-5,0) {};
	\node [bb] at (-6,0) {};
	\node [bb] at (-7,0) {};
	\node [bb] at (-8,0) {};
	\node [bb] at (-9,0) {};
	
	\node [wb] at (11,1) {};
	\node [wb] at (10,1) {};
	\node [wb] at (9,1) {};
	\node [wb] at (8,1) {};
	\node [wb] at (7,1) {};
	\node [wb] at (6,1) {};
	\node [wb] at (5,1) {};
	\node [bb] at (4,1) {};
	\node [wb] at (3,1) {};
	\node [wb] at (2,1) {};
	\node [wb] at (1,1) {};
	\node [wb] at (0,1) {};
	\node [wb] at (-1,1) {};
	\node [bb] at (-2,1) {};
	\node [wb] at (-3,1) {};
	\node [wb] at (-4,1) {};
	\node [bb] at (-5,1) {};
	\node [bb] at (-6,1) {};
	\node [bb] at (-7,1) {};
	\node [bb] at (-8,1) {};
	\node [bb] at (-9,1) {};
	
	\node [wb] at (11,2) {};
	\node [wb] at (10,2) {};
	\node [wb] at (9,2) {};
	\node [bb] at (8,2) {};
	\node [wb] at (7,2) {};
	\node [wb] at (6,2) {};
	\node [wb] at (5,2) {};
	\node [wb] at (4,2) {};
	\node [wb] at (3,2) {};
	\node [bb] at (2,2) {};
	\node [wb] at (1,2) {};
	\node [wb] at (0,2) {};
	\node [bb] at (-1,2) {};
	\node [bb] at (-2,2) {};
	\node [bb] at (-3,2) {};
	\node [bb] at (-4,2) {};
	\node [bb] at (-5,2) {};
	\node [bb] at (-6,2) {};
	\node [bb] at (-7,2) {};
	\node [bb] at (-8,2) {};
	\node [bb] at (-9,2) {};
	
	\end{tikzpicture}
\end{center}
 Then $\mathcal{H}^{CJ}_{3,\tilde{\textbf{s}}}(X^{1,{\bf 0}}(\ulambda))=\{(13,10,2,2),(13,18,2,1),(13,14,2,3),(17,14,3,3),(17,18,3,1),(17,10,3,2)\}$,
and so
$ \mathcal{HL}^{CJ}_{3,\tilde{\textbf{s}}}(X^{1,{\bf 0}}(\ulambda)) =\{3,-5,-1,3,-1,7\}$.
   \end{exa}
 
Let now $X=(X_1,\ldots, X_l)$ be an $l$-symbol and let $t\in\mathbb{Z}_{>0}$. Let $d\in \{0,1,\ldots,l-1\}$.  Then 
  the \emph{$[d,t]$-core of $X$} is an $l$-symbol, denoted by \index{$X[d,t]$ (where $X$ is an $l$-symbol)} $X[d,t]$, obtained as follows. For each hook $(a,b,i,j)$ such that 
  $i-j \equiv d\, {\rm mod}\, l$ 
  and  $a-b=t$, 
   we replace $X^i$ by $X^i\setminus \{a\}$ and $X^j$ by $X^j\cup \{b\}$. We say that we ``remove'' the hook $(a,b,i,j)$. Continuing this process as long as necessary,
the resulting symbol has no hook $(a,b,i,j)$  such that $i-j \equiv d\, {\rm mod}\, l$ and  $a-b=t$, and it does not depend on the order in which the hooks are removed
(see \cite{BGO}).

\begin{exa}\label{firstabacus} 
  Let $l=2$ and $X=((0,2,4,7),(0,2,3,4,6,8))$. 
\begin{center}
\begin{tikzpicture}[scale=0.5, bb/.style={draw,circle,fill,minimum size=2.5mm,inner sep=0pt,outer sep=0pt}, wb/.style={draw,circle,fill=white,minimum size=2.5mm,inner sep=0pt,outer sep=0pt}]
	
	\node [] at (11,-1) {20};
	\node [] at (10,-1) {19};
	\node [] at (9,-1) {18};
	\node [] at (8,-1) {17};
	\node [] at (7,-1) {16};
	\node [] at (6,-1) {15};
	\node [] at (5,-1) {14};
	\node [] at (4,-1) {13};
	\node [] at (3,-1) {12};
	\node [] at (2,-1) {11};
	\node [] at (1,-1) {10};
	\node [] at (0,-1) {9};
	\node [] at (-1,-1) {8};
	\node [] at (-2,-1) {7};
	\node [] at (-3,-1) {6};
	\node [] at (-4,-1) {5};
	\node [] at (-5,-1) {4};
	\node [] at (-6,-1) {3};
	\node [] at (-7,-1) {2};
	\node [] at (-8,-1) {1};
	\node [] at (-9,-1) {0};

	\node [wb] at (11,0) {};
	\node [wb] at (10,0) {};
	\node [wb] at (9,0) {};
	\node [wb] at (8,0) {};
	\node [wb] at (7,0) {};
	\node [wb] at (6,0) {};
	\node [wb] at (5,0) {};
	\node [wb] at (4,0) {};
	\node [wb] at (3,0) {};
	\node [wb] at (2,0) {};
	\node [wb] at (1,0) {};
	\node [wb] at (0,0) {};
	\node [wb] at (-1,0) {};
	\node [bb] at (-2,0) {};
	\node [wb] at (-3,0) {};
	\node [wb] at (-4,0) {};
	\node [bb] at (-5,0) {};
	\node [wb] at (-6,0) {};
	\node [bb] at (-7,0) {};
	\node [wb] at (-8,0) {};
	\node [bb] at (-9,0) {};
	
	\node [wb] at (11,1) {};
	\node [wb] at (10,1) {};
	\node [wb] at (9,1) {};
	\node [wb] at (8,1) {};
	\node [wb] at (7,1) {};
	\node [wb] at (6,1) {};
	\node [wb] at (5,1) {};
	\node [wb] at (4,1) {};
	\node [wb] at (3,1) {};
	\node [wb] at (2,1) {};
	\node [wb] at (1,1) {};
	\node [wb] at (0,1) {};
	\node [bb] at (-1,1) {};
	\node [wb] at (-2,1) {};
	\node [bb] at (-3,1) {};
	\node [wb] at (-4,1) {};
	\node [bb] at (-5,1) {};
	\node [bb] at (-6,1) {};
	\node [bb] at (-7,1) {};
	\node [wb] at (-8,1) {};
	\node [bb] at (-9,1) {};

	\end{tikzpicture}
\end{center}
 Then $X[0,3]=((0,1,2,4),(0,1,2,3,5,6))$ and  $X[1,3]=((0,1,2,3,5),(0,1,2,3,4))$.
  \end{exa}

\begin{Rem}
For any $e \in \mathbb{Z}_{>0}$, it is easy to see that $X[0,e]=(\lambda(X_1)^{\circ},\ldots,\lambda(X_l)^{\circ})$.
\end{Rem}

On the other hand,  in \cite{JL}, for $e \in \mathbb{Z}_{>0}$ and ${\bf s}\in \N^l$, we have a notion of an \emph{$(e,{\bf s})$-core for a multipartition} $\ulambda$ which can be defined in terms of the symbol $X^{1,{\bf s}}(\ulambda)$. For a general $l$-symbol $X$, this  $e$-core  is obtained as follows:
 \begin{enumerate}
 \item If $(a,a,i,j)$ is a hook with $i<j$, then  $X^i$ is replaced by $X^i\setminus \{a\}$ and $X^j$ by $X^j\cup \{a\}$. \smallbreak
 \item If $(a,a-e,l,1)$ is a hook, then  $X^l$ is replaced by $X^l \setminus \{a\}$ and $X^1$ by $X^1\cup \{a-e\}$.
 \end{enumerate}
Continuing this process as long as necessary,
 the resulting symbol is the $e$-core of $X$ and is denoted by \index{$X^{\circ}$} $X^{\circ}$, while its multicharge is denoted by \index{${\bf s}^{\circ}$}${\bf s}^{\circ}$. 
 Note that the above process for calculating the $e$-core can be 
 replaced by the following one:
  \begin{enumerate}
  \item  Applying Step (1) of the Jacon--Lecouvey algorithm. \smallbreak
    \item Taking the $[l-1,e]$-core. \smallbreak
 \item Repeating this process as long as necessary.
  \end{enumerate}
Moreover, note that 
\begin{itemize}
\item Step (1) does not affect the non-zero BGO-hook lengths. \smallbreak
 \item $X^\circ$ has only non-zero CJ-hook lengths, that is, $0 \notin \mathcal{HL}^{CJ} (X^{\circ})$.
 \end{itemize}
 If $X=X^{1,{\bf s}}(\ulambda)$, then the $l$-partition $\ulambda(X^\circ)$ is the $(e,{\bf s})$-core of $\ulambda$ and is denoted by $\ulambda^{\circ}$.
 
 \medskip
 
We now compare our two  notions of hook lengths   in the general case. Let $\ulambda$ be an $l$-partition. First, we need a useful result (\cite[Theorem 3.3]{BGO}): for $k \in \N$ and ${\bf s},{\bf x} \in \mathbb{N}^l$, as multisets we have
$$\mathcal{HL}^{BGO}(X^{1,{\bf s}} (\ulambda))[k;{\bf x}] =\mathcal{HL}^{BGO}( X^{1,\boldsymbol{0}}(\ulambda))[k;{\bf x}'] \cup\mathcal{HL}^{BGO} (X^{1,{\bf s}}(\boldsymbol{\emptyset}))[k;{\bf x}]$$
where ${\bf x}'=(x_1+s_1k,\ldots,x_l+s_l k)$ and we take $X^{1,{\bf s}}(\boldsymbol{\emptyset})$ to have the same multicharge
  as $X^{1,{\bf s}}(\ulambda)$.
If we now take ${\bf x}=\boldsymbol{0}$ and $k=1$, we get
$$\mathcal{HL}^{BGO}(X^{1,{\bf s}} (\ulambda))=\mathcal{HL}^{BGO}( X^{1,\boldsymbol{0}}(\ulambda))[1;{\bf s}] \cup\mathcal{HL}^{BGO} (X^{1,{\bf s}}(\boldsymbol{\emptyset})),$$
which in turn yields the following.
\begin{Prop}\label{propositionabove} 
Let $\ulambda$ be an $l$-partition and ${\bf s} \in \mathbb{N}^l$. As multisets, we have
$$\mathcal{HL}^{BGO}(X^{1,{\bf s}} (\ulambda))=|\mathcal{HL}^{CJ}( X^{1,{\bf s}}(\ulambda))|\cup\mathcal{HL}^{BGO} (X^{1,{\bf s}}(\boldsymbol{\emptyset})),$$
where $|\cdot|$ means that we  take the absolute value of each element in the multiset.
\end{Prop}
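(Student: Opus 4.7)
The plan is to combine the BGO-style decomposition displayed immediately above the statement with the CJ-to-BGO correspondence \eqref{CJvsBGO}, applied to the zero-shifted symbol $X^{1,\boldsymbol{0}}(\ulambda)$. The displayed equation
$$\mathcal{HL}^{BGO}(X^{1,{\bf s}}(\ulambda))=\mathcal{HL}^{BGO}(X^{1,\boldsymbol{0}}(\ulambda))[1;{\bf s}] \cup \mathcal{HL}^{BGO}(X^{1,{\bf s}}(\boldsymbol{\emptyset}))$$
reduces the task to proving the multiset identity
$$\mathcal{HL}^{BGO}(X^{1,\boldsymbol{0}}(\ulambda))[1;{\bf s}]=|\mathcal{HL}^{CJ}(X^{1,{\bf s}}(\ulambda))|.$$

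To establish this, I would observe that $X^{1,\boldsymbol{0}}(\ulambda)$ has equal multicharge $(m,\ldots,m)$ by construction, so Corollary \ref{bij}, and consequently the bijection \eqref{CJvsBGO}, apply to it. Specialising \eqref{CJvsBGO} with $k=1$ and shift ${\bf s}$ yields a bijection
$$\mathcal{HL}^{CJ}_{1,{\bf s}}(X^{1,\boldsymbol{0}}(\ulambda)) \longrightarrow \mathcal{HL}^{BGO}(X^{1,\boldsymbol{0}}(\ulambda))[1;{\bf s}], \quad x \longmapsto \pm x.$$
Unwinding the definition of $\mathcal{H}^{CJ}_{1,{\bf s}}$, one has $\mathcal{H}^{CJ}_{1,{\bf s}}(X^{1,\boldsymbol{0}}(\ulambda))=\mathcal{H}^{CJ}(X^{1,\boldsymbol{0}}(\ulambda)[{\bf s}])$, and by the very definition of $X^{k,{\bf s}}(\ulambda)$ this is $\mathcal{H}^{CJ}(X^{1,{\bf s}}(\ulambda))$. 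Taking absolute values on both sides of the bijection therefore produces
$$|\mathcal{HL}^{CJ}(X^{1,{\bf s}}(\ulambda))|=|\mathcal{HL}^{BGO}(X^{1,\boldsymbol{0}}(\ulambda))[1;{\bf s}]|.$$

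To conclude, I would remove the absolute value signs on the right. This is legitimate because $\mathcal{HL}^{BGO}(X^{1,\boldsymbol{0}}(\ulambda))[1;{\bf s}]$ already consists of non-negative integers: by the BGO decomposition recalled in the first paragraph, it is a sub-multiset of $\mathcal{HL}^{BGO}(X^{1,{\bf s}}(\ulambda))$, and every entry of the latter is of the form $a-b$ with $a\geq b$, hence non-negative. Substituting $|\mathcal{HL}^{CJ}(X^{1,{\bf s}}(\ulambda))|$ for $\mathcal{HL}^{BGO}(X^{1,\boldsymbol{0}}(\ulambda))[1;{\bf s}]$ in the BGO decomposition delivers the proposition. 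The only real subtlety is the bookkeeping that identifies $\mathcal{H}^{CJ}_{1,{\bf s}}(X^{1,\boldsymbol{0}}(\ulambda))$ with $\mathcal{H}^{CJ}(X^{1,{\bf s}}(\ulambda))$ and then disposes of the $\pm$ ambiguity via non-negativity; once this is in place, the proof is a direct substitution and does not require revisiting the combinatorics of hooks.
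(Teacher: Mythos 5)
Your overall route is the paper's: combine the decomposition quoted from \cite[Theorem 3.3]{BGO} with the correspondence \eqref{CJvsBGO} at $k=1$, using the identification $\mathcal{H}^{CJ}_{1,{\bf s}}(X^{1,\boldsymbol{0}}(\ulambda))=\mathcal{H}^{CJ}(X^{1,{\bf s}}(\ulambda))$ and the fact that $X^{1,\boldsymbol{0}}(\ulambda)$ has equal multicharge. The one step that does not survive scrutiny is your final non-negativity claim. The multiset $\mathcal{HL}^{BGO}(X^{1,\boldsymbol{0}}(\ulambda))[1;{\bf s}]=\{(a-b)+s_i-s_j\}$ can contain negative entries: any cross-runner BGO-hook $(a,b,i,j)$ with $a-b<s_j-s_i$ produces one. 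Concretely, for $\ulambda=((1),\emptyset)$ one has $X^{1,\boldsymbol{0}}(\ulambda)=((0,2),(0,1))$ with $\mathcal{HL}^{BGO}=\{1\,|\,0\}$, and for ${\bf s}=(4,1)$ the hook $(1,1,2,1)$ contributes $0+s_2-s_1=-3$, so that $\mathcal{HL}^{BGO}(X^{1,\boldsymbol{0}}(\ulambda))[1;{\bf s}]=\{1,-3\}$, whereas $\mathcal{HL}^{BGO}(X^{1,{\bf s}}(\ulambda))=\{1,1,1,2,3\}$ and $\mathcal{HL}^{BGO}(X^{1,{\bf s}}(\boldsymbol{\emptyset}))=\{1,1,2\}$. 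Thus the displayed decomposition holds only with an absolute value around the first term on the right (exactly as the paper writes it two displays earlier when quoting \cite[Theorem 4.4]{BGO}), and your argument that the shifted multiset is non-negative because it is a sub-multiset of a non-negative multiset is circular: it presupposes the unsigned decomposition, which is precisely what fails in this example.

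The repair costs nothing: read the decomposition as $\mathcal{HL}^{BGO}(X^{1,{\bf s}}(\ulambda))=|\mathcal{HL}^{BGO}(X^{1,\boldsymbol{0}}(\ulambda))[1;{\bf s}]|\cup\mathcal{HL}^{BGO}(X^{1,{\bf s}}(\boldsymbol{\emptyset}))$, and note that taking absolute values of both sides of the $x\mapsto\pm x$ bijection in \eqref{CJvsBGO} already gives $|\mathcal{HL}^{CJ}(X^{1,{\bf s}}(\ulambda))|=|\mathcal{HL}^{BGO}(X^{1,\boldsymbol{0}}(\ulambda))[1;{\bf s}]|$ (in the above example both equal $\{1,3\}$); substituting yields the proposition with no need to drop any absolute value. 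So your conclusion and your chain of identifications are the intended ones, but the paragraph justifying the removal of $|\cdot|$ should simply be deleted rather than fixed.
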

    The following proposition is a reformulation of \cite[Theorem 5.4]{BGO}.
    \begin{Th} Let $t\in \mathbb{Z}_{>0}$and  $d\in \{0,1,\ldots,l-1\}$.
    The non-zero elements of $\mathcal{HL}^{BGO} (X[d,t])$  are contained in the non-zero elements of 
      $\mathcal{HL}^{BGO} (X)$.
    \end{Th}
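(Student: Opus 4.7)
By construction, $X[d,t]$ is obtained from $X$ by iteratively removing hooks $(a,b,i,j)$ satisfying $i-j\equiv d\pmod l$ and $a-b=t$. Each such hook is in particular a BGO-hook of length $t$ since $a>b$. By an easy induction on the number of such removals, it suffices to handle one removal at a time. So I may assume $X'$ is obtained from $X$ by the replacements $X_i\mapsto X_i\setminus\{a\}$ and $X_j\mapsto X_j\cup\{b\}$ with $a-b=t>0$, and I must show that for every integer $h>0$, the multiplicity of $h$ in $\mathcal{HL}^{BGO}(X')$ is at most its multiplicity in $\mathcal{HL}^{BGO}(X)$.

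Write $N_h(Y)$ for the number of BGO-hooks of length $h$ in a symbol $Y$. A BGO-hook of positive length $h$ can be viewed as a pair consisting of a bead at height $c$ on some runner $p$ together with an empty spot at height $c-h$ on some runner $q$, with $p,q$ unconstrained. The symbols $X$ and $X'$ differ only at the two positions $(a,i)$ and $(b,j)$, so a direct inclusion--exclusion on these two local changes expresses $N_h(X)-N_h(X')$ as a simple combination of the bead/hole statistics at the four heights $a\pm h$ and $b\pm h$, with a correction $-[h=t]$ compensating for the fact that the removed hook $(a,b,i,j)$ itself is counted twice among the "losses" when $h=t$.

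The remaining and main step is to prove that this expression is non-negative for every $h>0$. This is precisely the content of \cite[Theorem~5.4]{BGO}. The idea there is a bead-matching on the abacus: every BGO-hook of length $h$ newly appearing in $X'$ through the promotion of $b$ to a bead or the demotion of $a$ to a hole is matched with a distinct BGO-hook of length $h$ in $X$ that is destroyed under the same operation, using that $a\in X_i$ and $b\in\Gamma_j$ together with $a-b=t$ provide the necessary witnesses on intermediate positions. The hard part, which prevents a naive runner-by-runner argument, is that the runner labels of a positive-length BGO-hook are unconstrained; the matching must therefore be carried out globally across all runners rather than inside each pair $(i,j)$. Once this inequality is established, summing over $h>0$ gives the desired multiset containment and the theorem follows.
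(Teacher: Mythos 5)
Your proposal is correct and follows essentially the same route as the paper: the paper offers no proof of its own, stating only that this theorem is a reformulation of \cite[Theorem 5.4]{BGO}, and your argument likewise defers the substantive inequality to that same reference. The extra scaffolding you supply (reduction to a single hook removal and the local inclusion--exclusion at the two modified positions) merely explains why the citation applies and does not constitute a different approach.
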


    From this theorem  and the above discussion, we obtain:
 
        \begin{Prop}\label{prop2.3} Let $\ulambda$ be an $l$-partition and ${\bf s} \in \mathbb{N}^l$. Set  $X:=X^{1,{\bf s}}(\ulambda)$. Then 
 the non-zero elements of  $|\mathcal{HL}^{CJ} (X^{\circ})|\cup  \mathcal{HL}^{BGO} (X^{1,{\bf s}^{\circ}}(\boldsymbol{\emptyset})) $  are contained in 
      $|\mathcal{HL}^{CJ} (X)|\cup  \mathcal{HL}^{BGO} (X^{1,{\bf s}}(\boldsymbol{\emptyset}))$. In particular, for ${\bf s}=\boldsymbol{0}$,  the elements of  $|\mathcal{HL}^{CJ} (X^{\circ})| $  are contained in 
      $|\mathcal{HL}^{CJ} (X)|$.
    \end{Prop}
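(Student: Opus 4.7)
The plan is to reduce the statement on CJ-hook lengths to one on BGO-hook lengths and then invoke the theorem \cite[Theorem 5.4]{BGO} quoted immediately before. The bridge is Proposition \ref{propositionabove}: applied both to $X = X^{1,{\bf s}}(\ulambda)$ and to its core, viewed as $X^\circ = X^{1,{\bf s}^\circ}(\ulambda^\circ)$ for its own multicharge ${\bf s}^\circ$, it gives the two multiset identities
\begin{align*}
\mathcal{HL}^{BGO}(X) &= |\mathcal{HL}^{CJ}(X)| \cup \mathcal{HL}^{BGO}(X^{1,{\bf s}}(\boldsymbol{\emptyset})), \\
\mathcal{HL}^{BGO}(X^\circ) &= |\mathcal{HL}^{CJ}(X^\circ)| \cup \mathcal{HL}^{BGO}(X^{1,{\bf s}^\circ}(\boldsymbol{\emptyset})).
\end{align*}
So the desired containment would follow at once from the purely BGO-side inclusion of the non-zero elements of $\mathcal{HL}^{BGO}(X^\circ)$ into those of $\mathcal{HL}^{BGO}(X)$.

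To establish this BGO-side inclusion, I would use the alternative description of the $(e,{\bf s})$-core-computation algorithm stated just above the proposition: alternate Step (1) of the Jacon--Lecouvey algorithm with the operation of taking the $[l-1,e]$-core, and repeat. The first bullet in the preceding remark tells us that Step (1) does not affect the non-zero BGO-hook lengths, while \cite[Theorem 5.4]{BGO} tells us that the non-zero elements of $\mathcal{HL}^{BGO}(X[l-1,e])$ are contained in the non-zero elements of $\mathcal{HL}^{BGO}(X)$. A short induction on the number of iterations of the two-step procedure then delivers the required inclusion of non-zero BGO-hook lengths of $X^\circ$ into those of $X$, and together with the two decompositions above this yields the main statement.

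For the ``in particular'' case ${\bf s}=\boldsymbol{0}$, the symbol $X^{1,\boldsymbol{0}}(\boldsymbol{\emptyset})$ has constant multicharge $(m,\ldots,m)$ and every $\beta$-set equal to $(0,1,\ldots,m-1)$; hence every $a\in X_i$ is strictly less than every $b\in \Gamma_j$, and no quadruple $(a,b,i,j)$ can be a BGO-hook, so $\mathcal{HL}^{BGO}(X^{1,\boldsymbol{0}}(\boldsymbol{\emptyset}))=\emptyset$. Combined with the second bullet of the remark, namely $0\notin \mathcal{HL}^{CJ}(X^\circ)$, this collapses the main inclusion to $|\mathcal{HL}^{CJ}(X^\circ)|\subseteq |\mathcal{HL}^{CJ}(X)|$.

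The main obstacle is essentially bookkeeping rather than conceptual: BGO-hook lengths are always non-negative (and vanish only for hooks $(a,a,i,j)$ with $i>j$), whereas CJ-hook lengths can be negative, so the ``non-zero'' qualifier must be propagated carefully through Proposition \ref{propositionabove}. What makes the argument clean is precisely the fact $0\notin\mathcal{HL}^{CJ}(X^\circ)$ from the remark, which ensures that every element of $|\mathcal{HL}^{CJ}(X^\circ)|$ is non-zero and therefore covered by the BGO-side inclusion.
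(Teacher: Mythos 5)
Your proposal is correct and follows essentially the same route the paper intends: it combines Proposition \ref{propositionabove} with the alternative two-step description of the Jacon--Lecouvey core algorithm and the quoted reformulation of \cite[Theorem 5.4]{BGO}, which is exactly the ``above discussion'' the authors invoke. Your fleshing-out of the induction on the iterations and of the ${\bf s}=\boldsymbol{0}$ case (using $\mathcal{HL}^{BGO}(X^{1,\boldsymbol{0}}(\boldsymbol{\emptyset}))=\emptyset$ and $0\notin\mathcal{HL}^{CJ}(X^\circ)$) is accurate and supplies details the paper leaves implicit.
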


    \subsection{The $a$-function}\label{afonc}

For any $l$-symbol $X$, if we write the elements of $X$ as $(\kappa_1,\ldots,\kappa_m)$ in decreasing order, then we define \index{$\mathfrak{a}(X)$}
$$\mathfrak{a}(X):=\sum_{1\leq i\leq m} (i-1) \kappa_i.$$

 Let $\ulambda=(\lambda^1,\ldots,\lambda^{l})$ be an $l$-partition. Let ${\bf s}=(s_1,\ldots,s_l) \in \mathbb{N}^l$ and let $k \in \mathbb{N}$. Recall that  if $X^{1,{\bf 0}}(\ulambda)$ has multicharge $(m_{\ulambda},m_{\ulambda},\ldots,m_{\ulambda})$, then
 $X^{k,{\bf s}}(\ulambda)$ has multicharge $(m_{\ulambda}+s_1,\ldots,m_{\ulambda}+s_l)$.

 The $a$-value of $\ulambda$ is  \cite[Proposition 5.5.11]{GJ}:\index{$a_{{\bf s},k} (\ulambda)$}
   $$a_{{\bf s},k} (\ulambda)=\mathfrak{a}(X^{k,{\bf s}}(\ulambda))-
   \mathfrak{a}(X^{k,{\bf s}}(\boldsymbol{\emptyset}))$$
  where we take $X^{k,{\bf s}}(\boldsymbol{\emptyset})$ to have the same multicharge
  as $X^{k,{\bf s}}(\ulambda)$.

We will simply write \index{$a(\ulambda)$ } $a(\ulambda)$ for $a_{{\bf 0},1} (\ulambda)$. Note that for $l=1$, the charge does not affect the value of the $a$-function. Moreover, we have $a_{{\bf s},k} (\ulambda)=ka_{{\bf s},1} (\ulambda)$.

We will now use the notation in the beginning of the last subsection.
 Fix $e\in \Z_{>0}$. Let $n\in \mathbb{N}$ and let $\lambda$ be a partition of $n$. Let $\ulambda$ be the $e$-quotient of $\lambda$ and let $\lambda^{\circ}$ be the $e$-core of $\lambda$.
 Let ${\bf s}=(s_1,\ldots,s_e)$ be the multicharge of $X(\ulambda)$  and 
$\tilde{\bf s}:=(es_1,es_2+1,\ldots,es_e+(e-1))$.
 
 \begin{Prop}\label{a-function} We have
\begin{equation}\label{a-funeq}
a(\lambda)=a (\lambda^{\circ})+a_{\tilde{\bf s},e} (\ulambda).
\end{equation}
\end{Prop}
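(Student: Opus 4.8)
The plan is to reduce everything to the statistic $\mathfrak{a}$ and to exploit the elementary reformulation $\mathfrak{a}(X)=\sum_{\{x,y\}}\min(x,y)$, where the sum runs over unordered pairs of the pooled multiset of all entries of the symbol $X$. Since $a(\lambda)=\mathfrak{a}(X(\lambda))-\mathfrak{a}(X(\boldsymbol{\emptyset}))$ and $a(\lambda^{\circ})=\mathfrak{a}(X(\lambda^{\circ}))-\mathfrak{a}(X(\boldsymbol{\emptyset}))$ computed with a common charge $M$, the quantity $a(\lambda)-a(\lambda^{\circ})$ is just $\mathfrak{a}(X(\lambda))-\mathfrak{a}(X(\lambda^{\circ}))$, whereas $a_{\tilde{\bf s},e}(\ulambda)=\mathfrak{a}(X^{e,\tilde{\bf s}}(\ulambda))-\mathfrak{a}(X^{e,\tilde{\bf s}}(\boldsymbol{\emptyset}))$. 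I would first enlarge $X(\lambda)$ to a sufficiently large $\beta$-set; this changes neither $a(\lambda)$ nor $a(\lambda^{\circ})$, and it lets me assume that every runner $Y_j$ of the $e$-abacus carries a bead on level $0$, equivalently $\{0,\dots,e-1\}\subseteq X(\lambda)$, a normalisation that turns out to be crucial.

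The key observation is that the charged scaling defining $X^{e,\tilde{\bf s}}(\ulambda)=(eX^{1,\boldsymbol{0}}(\ulambda))[\tilde{\bf s}]$ is calibrated precisely so that the shift $\tilde s_j=es_j+(j-1)$ undoes the scaling of the padding: the entry of runner $j$ arising from $y\in Y_j$ becomes $e(y+m)+(j-1)=\big(ey+(j-1)\big)+em$. On each runner I would split the entries of $X^{e,\tilde{\bf s}}(\ulambda)$ into a lower part $L$ (the segment $\{0,\dots,\tilde s_j-1\}$ together with the scaled lower padding $\{eu+(j-1)\mid s_j\le u\le m-1\}$, which is identical for $\ulambda$ and for $\boldsymbol{\emptyset}$) and an upper part $U$ (the images of the genuine beads). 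Pooling $U$ over all runners gives exactly the set $X(\lambda)+em$, and the corresponding pool $U^{\boldsymbol{\emptyset}}$ for the empty multipartition is $X(\lambda^{\circ})+em$. Since a global shift by $em$ changes $\mathfrak{a}$ by the same amount $em\binom{M}{2}$ in both cases, this already yields $\mathfrak{a}(U)-\mathfrak{a}(U^{\boldsymbol{\emptyset}})=\mathfrak{a}(X(\lambda))-\mathfrak{a}(X(\lambda^{\circ}))=a(\lambda)-a(\lambda^{\circ})$.

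Writing $\mathfrak{a}(X^{e,\tilde{\bf s}}(\ulambda))=\mathfrak{a}(L)+\mathfrak{a}(U)+\sum_{c\in L,\,p\in U}\min(c,p)$ and subtracting the analogous expression for $\boldsymbol{\emptyset}$, the common terms $\mathfrak{a}(L)$ cancel, so it remains to show that the cross terms agree, that is $\sum_{c\in L,\,p\in U}\min(c,p)=\sum_{c\in L,\,p\in U^{\boldsymbol{\emptyset}}}\min(c,p)$. This is the heart of the argument and the step I expect to be the main obstacle. For a lower entry $c\le em$ it is immediate, since then $\min(c,p)=c$ for every upper bead $p\ge em$ and $|U|=|U^{\boldsymbol{\emptyset}}|=M$; the only delicate entries are the top padding beads on a runner with $s_j=m$ and $j\ge 2$, which may lie in the window $(em,em+e)$. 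Here the level-$0$ normalisation enters: both $X(\lambda)$ and $X(\lambda^{\circ})$ contain $\{0,\dots,e-1\}$, so $U$ and $U^{\boldsymbol{\emptyset}}$ coincide on $[em,em+e)$, whence for such a $c$ the beads below $c$ and the count of beads above $c$ are the same in $U$ and in $U^{\boldsymbol{\emptyset}}$, forcing the two inner sums to be equal. Combining this with the previous paragraph gives $a_{\tilde{\bf s},e}(\ulambda)=a(\lambda)-a(\lambda^{\circ})$, which is \eqref{a-funeq}.
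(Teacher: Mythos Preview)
Your approach is essentially the same as the paper's. Both arguments rest on the identification that, after pooling over the runners, the ``genuine'' beads of $X^{e,\tilde{\bf s}}(\ulambda)$ are exactly $X(\lambda)+em_{\ulambda}$ and those of $X^{e,\tilde{\bf s}}(\boldsymbol{\emptyset})$ are $X(\lambda^{\circ})+em_{\ulambda}$, while the remaining (lower) entries coincide; from this the equality of the two differences of $\mathfrak{a}$-values follows.

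The one genuine difference is care, not strategy. The paper dispatches the cancellation of the lower entries in a single phrase (``the same holds for all remaining (smallest) elements''), whereas you unpack this via the reformulation $\mathfrak{a}(X)=\sum_{\{x,y\}}\min(x,y)$ and isolate the cross terms $\sum_{c\in L,\,p\in U}\min(c,p)$. Your observation that the delicate case is a lower entry $c\in[em,em+e)$ on a runner with $s_j=m$, $j\ge 2$, is correct, and your normalisation $\{0,\dots,e-1\}\subseteq X(\lambda)$ (forcing $U$ and $U^{\boldsymbol{\emptyset}}$ to coincide on that window) is exactly what is needed to make the cancellation rigorous. This normalisation is legitimate: adding a multiple of $e$ beads to $X(\lambda)$ leaves $a(\lambda)$, $a(\lambda^{\circ})$ and $\ulambda$ unchanged, and shifts every $\tilde s_j$ by the same amount, which does not affect $a_{\tilde{\bf s},e}(\ulambda)$. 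So your plan closes a small gap that the paper's presentation leaves implicit.
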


 \begin{proof}
First note that, for $j=1,\ldots,e$, we have 
$x \in Y(\lambda)_j$ if and only if $e x +j-1+em_{\ulambda} \in X^{e,\tilde{\bf s}} (\ulambda)_j$. 

On the other hand, by definition of $Y(\lambda)$, we have $x \in Y(\lambda)_j$ if and only if $e x +j-1 \in X^{1,{\bf 0}} (\lambda)$. 
Note that the function $x\mapsto e x +j-1$ is a strictly increasing function.

Similarly, we have  
$y \in Y(\lambda^\circ)_j$ if and only if $e y +j-1+em_{\ulambda} \in X^{e,\tilde{\bf s}} (\boldsymbol{\emptyset})_j$. 

On the other hand, by definition of $Y(\lambda^\circ)$, we have $y \in Y(\lambda^\circ)_j$ if and only if $e y +j-1 \in X^{1,{\bf t}}  (\lambda^\circ)$, where ${\bf t}$ is taken so that $\#X^{1,{\bf t}}  (\lambda^\circ)=\#Y(\lambda^\circ)$.

Therefore, we obtain
$$a(\lambda)-a(\lambda^\circ)=\mathfrak{a}(X(\lambda))-\mathfrak{a}(X(\lambda^\circ))=\mathfrak{a}(X^{e,\tilde{\bf s}} (\ulambda))-\mathfrak{a}(X^{e,\tilde{\bf s}} (\boldsymbol{\emptyset}))
$$
because the terms $em_\ulambda$ cancel out (since their number is equal to $\#Y(\lambda)=\#Y(\lambda^\circ)$), and the same holds for all remaining (smallest) elements of $ X^{e,\tilde{\bf s}} (\ulambda)$ and $X^{e,\tilde{\bf s}} (\boldsymbol{\emptyset})$.
 \end{proof}

    \begin{exa}  Let us revisit  Example \ref{coreexample}.
  Let $\lambda=(3,2,1,1,1)$ and $e=3$. Then $\lambda^{\circ}=(1,1)$ and $\ulambda=(\emptyset,(1),(1))$.
  Moreover, $\tilde{\bf s}=(9,4,8)$.
  The abacus configuration of $X^{1,{\bf 0}} (\lambda)$ is
    \begin{center}
   \begin{tikzpicture}[scale=0.5, bb/.style={draw,circle,fill,minimum size=2.5mm,inner sep=0pt,outer sep=0pt}, wb/.style={draw,circle,fill=white,minimum size=2.5mm,inner sep=0pt,outer sep=0pt}]
	
	\node [] at (11,-1) {20};
	\node [] at (10,-1) {19};
	\node [] at (9,-1) {18};
	\node [] at (8,-1) {17};
	\node [] at (7,-1) {16};
	\node [] at (6,-1) {15};
	\node [] at (5,-1) {14};
	\node [] at (4,-1) {13};
	\node [] at (3,-1) {12};
	\node [] at (2,-1) {11};
	\node [] at (1,-1) {10};
	\node [] at (0,-1) {9};
	\node [] at (-1,-1) {8};
	\node [] at (-2,-1) {7};
	\node [] at (-3,-1) {6};
	\node [] at (-4,-1) {5};
	\node [] at (-5,-1) {4};
	\node [] at (-6,-1) {3};
	\node [] at (-7,-1) {2};
	\node [] at (-8,-1) {1};
	\node [] at (-9,-1) {0};

	\node [wb] at (11,0) {};
	\node [wb] at (10,0) {};
	\node [wb] at (9,0) {};
	\node [wb] at (8,0) {};
	\node [wb] at (7,0) {};
	\node [wb] at (6,0) {};
	\node [wb] at (5,0) {};
	\node [wb] at (4,0) {};
	\node [wb] at (3,0) {};
	\node [wb] at (2,0) {};
	\node [wb] at (1,0) {};
	\node [wb] at (0,0) {};
	\node [bb] at (-1,0) {};
	\node [wb] at (-2,0) {};
	\node [bb] at (-3,0) {};
	\node [wb] at (-4,0) {};
	\node [bb] at (-5,0) {};
	\node [bb] at (-6,0) {};
	\node [bb] at (-7,0) {};
	\node [wb] at (-8,0) {};
	\node [bb] at (-9,0) {};
	\end{tikzpicture}
	\end{center}
The abacus configuration of $Y(\lambda)$ is		
 \begin{center}
   \begin{tikzpicture}[scale=0.5, bb/.style={draw,circle,fill,minimum size=2.5mm,inner sep=0pt,outer sep=0pt}, wb/.style={draw,circle,fill=white,minimum size=2.5mm,inner sep=0pt,outer sep=0pt}]
	
	\node [] at (11,-1) {20};
	\node [] at (10,-1) {19};
	\node [] at (9,-1) {18};
	\node [] at (8,-1) {17};
	\node [] at (7,-1) {16};
	\node [] at (6,-1) {15};
	\node [] at (5,-1) {14};
	\node [] at (4,-1) {13};
	\node [] at (3,-1) {12};
	\node [] at (2,-1) {11};
	\node [] at (1,-1) {10};
	\node [] at (0,-1) {9};
	\node [] at (-1,-1) {8};
	\node [] at (-2,-1) {7};
	\node [] at (-3,-1) {6};
	\node [] at (-4,-1) {5};
	\node [] at (-5,-1) {4};
	\node [] at (-6,-1) {3};
	\node [] at (-7,-1) {2};
	\node [] at (-8,-1) {1};
	\node [] at (-9,-1) {0};

	\node [wb] at (11,0) {};
	\node [wb] at (10,0) {};
	\node [wb] at (9,0) {};
	\node [wb] at (8,0) {};
	\node [wb] at (7,0) {};
	\node [wb] at (6,0) {};
	\node [wb] at (5,0) {};
	\node [wb] at (4,0) {};
	\node [wb] at (3,0) {};
	\node [wb] at (2,0) {};
	\node [wb] at (1,0) {};
	\node [wb] at (0,0) {};
	\node [wb] at (-1,0) {};
	\node [wb] at (-2,0) {};
	\node [wb] at (-3,0) {};
	\node [wb] at (-4,0) {};
	\node [wb] at (-5,0) {};
	\node [wb] at (-6,0) {};
	\node [bb] at (-7,0) {};
	\node [bb] at (-8,0) {};
	\node [bb] at (-9,0) {};
	
	\node [wb] at (11,1) {};
	\node [wb] at (10,1) {};
	\node [wb] at (9,1) {};
	\node [wb] at (8,1) {};
	\node [wb] at (7,1) {};
	\node [wb] at (6,1) {};
	\node [wb] at (5,1) {};
	\node [wb] at (4,1) {};
	\node [wb] at (3,1) {};
	\node [wb] at (2,1) {};
	\node [wb] at (1,1) {};
	\node [wb] at (0,1) {};
	\node [wb] at (-1,1) {};
	\node [wb] at (-2,1) {};
	\node [wb] at (-3,1) {};
	\node [wb] at (-4,1) {};
	\node [wb] at (-5,1) {};
	\node [wb] at (-6,1) {};
	\node [wb] at (-7,1) {};
	\node [bb] at (-8,1) {};
	\node [wb] at (-9,1) {};
	
	\node [wb] at (11,2) {};
	\node [wb] at (10,2) {};
	\node [wb] at (9,2) {};
	\node [wb] at (8,2) {};
	\node [wb] at (7,2) {};
	\node [wb] at (6,2) {};
	\node [wb] at (5,2) {};
	\node [wb] at (4,2) {};
	\node [wb] at (3,2) {};
	\node [wb] at (2,2) {};
	\node [wb] at (1,2) {};
	\node [wb] at (0,2) {};
	\node [wb] at (-1,2) {};
	\node [wb] at (-2,2) {};
	\node [wb] at (-3,2) {};
	\node [wb] at (-4,2) {};
	\node [wb] at (-5,2) {};
	\node [wb] at (-6,2) {};
	\node [bb] at (-7,2) {};
	\node [wb] at (-8,2) {};
	\node [bb] at (-9,2) {};
	
	\end{tikzpicture}
\end{center}	
	Thus, the abacus configuration of $Y(\lambda^\circ)$ is		
 \begin{center}
   \begin{tikzpicture}[scale=0.5, bb/.style={draw,circle,fill,minimum size=2.5mm,inner sep=0pt,outer sep=0pt}, wb/.style={draw,circle,fill=white,minimum size=2.5mm,inner sep=0pt,outer sep=0pt}]
	
	\node [] at (11,-1) {20};
	\node [] at (10,-1) {19};
	\node [] at (9,-1) {18};
	\node [] at (8,-1) {17};
	\node [] at (7,-1) {16};
	\node [] at (6,-1) {15};
	\node [] at (5,-1) {14};
	\node [] at (4,-1) {13};
	\node [] at (3,-1) {12};
	\node [] at (2,-1) {11};
	\node [] at (1,-1) {10};
	\node [] at (0,-1) {9};
	\node [] at (-1,-1) {8};
	\node [] at (-2,-1) {7};
	\node [] at (-3,-1) {6};
	\node [] at (-4,-1) {5};
	\node [] at (-5,-1) {4};
	\node [] at (-6,-1) {3};
	\node [] at (-7,-1) {2};
	\node [] at (-8,-1) {1};
	\node [] at (-9,-1) {0};

	\node [wb] at (11,0) {};
	\node [wb] at (10,0) {};
	\node [wb] at (9,0) {};
	\node [wb] at (8,0) {};
	\node [wb] at (7,0) {};
	\node [wb] at (6,0) {};
	\node [wb] at (5,0) {};
	\node [wb] at (4,0) {};
	\node [wb] at (3,0) {};
	\node [wb] at (2,0) {};
	\node [wb] at (1,0) {};
	\node [wb] at (0,0) {};
	\node [wb] at (-1,0) {};
	\node [wb] at (-2,0) {};
	\node [wb] at (-3,0) {};
	\node [wb] at (-4,0) {};
	\node [wb] at (-5,0) {};
	\node [wb] at (-6,0) {};
	\node [bb] at (-7,0) {};
	\node [bb] at (-8,0) {};
	\node [bb] at (-9,0) {};
	
	\node [wb] at (11,1) {};
	\node [wb] at (10,1) {};
	\node [wb] at (9,1) {};
	\node [wb] at (8,1) {};
	\node [wb] at (7,1) {};
	\node [wb] at (6,1) {};
	\node [wb] at (5,1) {};
	\node [wb] at (4,1) {};
	\node [wb] at (3,1) {};
	\node [wb] at (2,1) {};
	\node [wb] at (1,1) {};
	\node [wb] at (0,1) {};
	\node [wb] at (-1,1) {};
	\node [wb] at (-2,1) {};
	\node [wb] at (-3,1) {};
	\node [wb] at (-4,1) {};
	\node [wb] at (-5,1) {};
	\node [wb] at (-6,1) {};
	\node [wb] at (-7,1) {};
	\node [wb] at (-8,1) {};
	\node [bb] at (-9,1) {};
	
	\node [wb] at (11,2) {};
	\node [wb] at (10,2) {};
	\node [wb] at (9,2) {};
	\node [wb] at (8,2) {};
	\node [wb] at (7,2) {};
	\node [wb] at (6,2) {};
	\node [wb] at (5,2) {};
	\node [wb] at (4,2) {};
	\node [wb] at (3,2) {};
	\node [wb] at (2,2) {};
	\node [wb] at (1,2) {};
	\node [wb] at (0,2) {};
	\node [wb] at (-1,2) {};
	\node [wb] at (-2,2) {};
	\node [wb] at (-3,2) {};
	\node [wb] at (-4,2) {};
	\node [wb] at (-5,2) {};
	\node [wb] at (-6,2) {};
	\node [wb] at (-7,2) {};
	\node [bb] at (-8,2) {};
	\node [bb] at (-9,2) {};	
	\end{tikzpicture}
\end{center}		
 and the corresponding abacus configuration of $X^{1,3}(\lambda^\circ)$ is
   \begin{center}
   \begin{tikzpicture}[scale=0.5, bb/.style={draw,circle,fill,minimum size=2.5mm,inner sep=0pt,outer sep=0pt}, wb/.style={draw,circle,fill=white,minimum size=2.5mm,inner sep=0pt,outer sep=0pt}]
	
	\node [] at (11,-1) {20};
	\node [] at (10,-1) {19};
	\node [] at (9,-1) {18};
	\node [] at (8,-1) {17};
	\node [] at (7,-1) {16};
	\node [] at (6,-1) {15};
	\node [] at (5,-1) {14};
	\node [] at (4,-1) {13};
	\node [] at (3,-1) {12};
	\node [] at (2,-1) {11};
	\node [] at (1,-1) {10};
	\node [] at (0,-1) {9};
	\node [] at (-1,-1) {8};
	\node [] at (-2,-1) {7};
	\node [] at (-3,-1) {6};
	\node [] at (-4,-1) {5};
	\node [] at (-5,-1) {4};
	\node [] at (-6,-1) {3};
	\node [] at (-7,-1) {2};
	\node [] at (-8,-1) {1};
	\node [] at (-9,-1) {0};

	\node [wb] at (11,0) {};
	\node [wb] at (10,0) {};
	\node [wb] at (9,0) {};
	\node [wb] at (8,0) {};
	\node [wb] at (7,0) {};
	\node [wb] at (6,0) {};
	\node [wb] at (5,0) {};
	\node [wb] at (4,0) {};
	\node [wb] at (3,0) {};
	\node [wb] at (2,0) {};
	\node [wb] at (1,0) {};
	\node [wb] at (0,0) {};
	\node [wb] at (-1,0) {};
	\node [wb] at (-2,0) {};
	\node [bb] at (-3,0) {};
	\node [bb] at (-4,0) {};
	\node [wb] at (-5,0) {};
	\node [bb] at (-6,0) {};
	\node [bb] at (-7,0) {};
	\node [bb] at (-8,0) {};
	\node [bb] at (-9,0) {};
	\end{tikzpicture}
	\end{center}
  So $a(\lambda)=(6-4)+2\cdot(4-3)+3\cdot(3-2)+4\cdot(2-1)=11$ and $a(\lambda^\circ)=5-4=1$.

Now, the abacus configuration of
 $X^{3,\tilde{\bf s}} (\ulambda)$ is
  \begin{center}
   \begin{tikzpicture}[scale=0.5, bb/.style={draw,circle,fill,minimum size=2.5mm,inner sep=0pt,outer sep=0pt}, wb/.style={draw,circle,fill=white,minimum size=2.5mm,inner sep=0pt,outer sep=0pt}]
	
	\node [] at (11,-1) {20};
	\node [] at (10,-1) {19};
	\node [] at (9,-1) {18};
	\node [] at (8,-1) {17};
	\node [] at (7,-1) {16};
	\node [] at (6,-1) {15};
	\node [] at (5,-1) {14};
	\node [] at (4,-1) {13};
	\node [] at (3,-1) {12};
	\node [] at (2,-1) {11};
	\node [] at (1,-1) {10};
	\node [] at (0,-1) {9};
	\node [] at (-1,-1) {8};
	\node [] at (-2,-1) {7};
	\node [] at (-3,-1) {6};
	\node [] at (-4,-1) {5};
	\node [] at (-5,-1) {4};
	\node [] at (-6,-1) {3};
	\node [] at (-7,-1) {2};
	\node [] at (-8,-1) {1};
	\node [] at (-9,-1) {0};

	\node [wb] at (11,0) {};
	\node [wb] at (10,0) {};
	\node [wb] at (9,0) {};
	\node [wb] at (8,0) {};
	\node [wb] at (7,0) {};
	\node [bb] at (6,0) {};
	\node [wb] at (5,0) {};
	\node [wb] at (4,0) {};
	\node [bb] at (3,0) {};
	\node [wb] at (2,0) {};
	\node [wb] at (1,0) {};
	\node [bb] at (0,0) {};
	\node [bb] at (-1,0) {};
	\node [bb] at (-2,0) {};
	\node [bb] at (-3,0) {};
	\node [bb] at (-4,0) {};
	\node [bb] at (-5,0) {};
	\node [bb] at (-6,0) {};
	\node [bb] at (-7,0) {};
	\node [bb] at (-8,0) {};
	\node [bb] at (-9,0) {};
	
	\node [wb] at (11,1) {};
	\node [wb] at (10,1) {};
	\node [wb] at (9,1) {};
	\node [wb] at (8,1) {};
	\node [wb] at (7,1) {};
	\node [wb] at (6,1) {};
	\node [wb] at (5,1) {};
	\node [bb] at (4,1) {};
	\node [wb] at (3,1) {};
	\node [wb] at (2,1) {};
	\node [wb] at (1,1) {};
	\node [wb] at (0,1) {};
	\node [wb] at (-1,1) {};
	\node [bb] at (-2,1) {};
	\node [wb] at (-3,1) {};
	\node [wb] at (-4,1) {};
	\node [bb] at (-5,1) {};
	\node [bb] at (-6,1) {};
	\node [bb] at (-7,1) {};
	\node [bb] at (-8,1) {};
	\node [bb] at (-9,1) {};
	
	\node [wb] at (11,2) {};
	\node [wb] at (10,2) {};
	\node [wb] at (9,2) {};
	\node [bb] at (8,2) {};
	\node [wb] at (7,2) {};
	\node [wb] at (6,2) {};
	\node [wb] at (5,2) {};
	\node [wb] at (4,2) {};
	\node [wb] at (3,2) {};
	\node [bb] at (2,2) {};
	\node [wb] at (1,2) {};
	\node [wb] at (0,2) {};
	\node [bb] at (-1,2) {};
	\node [bb] at (-2,2) {};
	\node [bb] at (-3,2) {};
	\node [bb] at (-4,2) {};
	\node [bb] at (-5,2) {};
	\node [bb] at (-6,2) {};
	\node [bb] at (-7,2) {};
	\node [bb] at (-8,2) {};
	\node [bb] at (-9,2) {};
	
	\end{tikzpicture}
\end{center}
and the abacus configuration of
 $X^{3,\tilde{\bf s}} (\boldsymbol{\emptyset})$ is
  \begin{center}
   \begin{tikzpicture}[scale=0.5, bb/.style={draw,circle,fill,minimum size=2.5mm,inner sep=0pt,outer sep=0pt}, wb/.style={draw,circle,fill=white,minimum size=2.5mm,inner sep=0pt,outer sep=0pt}]
	
	\node [] at (11,-1) {20};
	\node [] at (10,-1) {19};
	\node [] at (9,-1) {18};
	\node [] at (8,-1) {17};
	\node [] at (7,-1) {16};
	\node [] at (6,-1) {15};
	\node [] at (5,-1) {14};
	\node [] at (4,-1) {13};
	\node [] at (3,-1) {12};
	\node [] at (2,-1) {11};
	\node [] at (1,-1) {10};
	\node [] at (0,-1) {9};
	\node [] at (-1,-1) {8};
	\node [] at (-2,-1) {7};
	\node [] at (-3,-1) {6};
	\node [] at (-4,-1) {5};
	\node [] at (-5,-1) {4};
	\node [] at (-6,-1) {3};
	\node [] at (-7,-1) {2};
	\node [] at (-8,-1) {1};
	\node [] at (-9,-1) {0};

	\node [wb] at (11,0) {};
	\node [wb] at (10,0) {};
	\node [wb] at (9,0) {};
	\node [wb] at (8,0) {};
	\node [wb] at (7,0) {};
	\node [bb] at (6,0) {};
	\node [wb] at (5,0) {};
	\node [wb] at (4,0) {};
	\node [bb] at (3,0) {};
	\node [wb] at (2,0) {};
	\node [wb] at (1,0) {};
	\node [bb] at (0,0) {};
	\node [bb] at (-1,0) {};
	\node [bb] at (-2,0) {};
	\node [bb] at (-3,0) {};
	\node [bb] at (-4,0) {};
	\node [bb] at (-5,0) {};
	\node [bb] at (-6,0) {};
	\node [bb] at (-7,0) {};
	\node [bb] at (-8,0) {};
	\node [bb] at (-9,0) {};
	
	\node [wb] at (11,1) {};
	\node [wb] at (10,1) {};
	\node [wb] at (9,1) {};
	\node [wb] at (8,1) {};
	\node [wb] at (7,1) {};
	\node [wb] at (6,1) {};
	\node [wb] at (5,1) {};
	\node [wb] at (4,1) {};
	\node [wb] at (3,1) {};
	\node [wb] at (2,1) {};
	\node [bb] at (1,1) {};
	\node [wb] at (0,1) {};
	\node [wb] at (-1,1) {};
	\node [bb] at (-2,1) {};
	\node [wb] at (-3,1) {};
	\node [wb] at (-4,1) {};
	\node [bb] at (-5,1) {};
	\node [bb] at (-6,1) {};
	\node [bb] at (-7,1) {};
	\node [bb] at (-8,1) {};
	\node [bb] at (-9,1) {};
	
	\node [wb] at (11,2) {};
	\node [wb] at (10,2) {};
	\node [wb] at (9,2) {};
	\node [wb] at (8,2) {};
	\node [wb] at (7,2) {};
	\node [wb] at (6,2) {};
	\node [bb] at (5,2) {};
	\node [wb] at (4,2) {};
	\node [wb] at (3,2) {};
	\node [bb] at (2,2) {};
	\node [wb] at (1,2) {};
	\node [wb] at (0,2) {};
	\node [bb] at (-1,2) {};
	\node [bb] at (-2,2) {};
	\node [bb] at (-3,2) {};
	\node [bb] at (-4,2) {};
	\node [bb] at (-5,2) {};
	\node [bb] at (-6,2) {};
	\node [bb] at (-7,2) {};
	\node [bb] at (-8,2) {};
	\node [bb] at (-9,2) {};
	
	\end{tikzpicture}
\end{center}
whence
 $a_{\tilde{\bf s},e}(\ulambda)=(15-14)+2\cdot(13-12)+3\cdot(12-11)+4\cdot(11-10)=10$.
   \end{exa}

    \section{Schur elements of Ariki--Koike algebras}
    
    \subsection{Generic Ariki-Koike algebras}
Let ${\bf q}:=(Q_1,\,\ldots,\,Q_{l}\,;\,q)$ be a set of $l+1$ indeterminates  and
set $R:=\mathbb{Z}[{\bf q},{\bf q}^{-1}]$.
The {\it Ariki-Koike algebra} \index{$\mathcal{H}^{\bf q}_{n}$} $\mathcal{H}^{\bf q}_{n}$  is the associative $R$-algebra (with unit) with generators $T_0,\,T_1,\,\ldots,\,T_{n-1}$ and relations:
\begin{center}
$\begin{array}{rl}
(T_0 -Q_1) (T_0 -Q_2)\cdots(T_0 -Q_{l})=0& \\  \smallbreak
(T_i-q)(T_i+1)=0  & \text{for $1\leq i \leq n-1$}\\  \smallbreak
T_0T_1T_0T_1=T_1T_0T_1T_0&\\  \smallbreak
T_iT_{i+1}T_i=T_{i+1}T_iT_{i+1} & \text{for $1\leq i \leq n-2$}\\  \smallbreak
T_iT_j=T_jT_i  &\text{for  $0\leq i <j \leq n-1$ with $j-i>1$.}
\end{array}$
\end{center}

It follows from \cite{ArKo} and Ariki's semisimplicity criterion \cite{Arsem} that the algebra $\Q({\bf q})\mathcal{H}^{\bf q}_{n}$ is split semisimple. We have a bijection $\Pi^l_n \longrightarrow \Irr(\Q({\bf q})\mathcal{H}^{\bf q}_{n}),\, \ulambda  \longmapsto \chi^{\ulambda}$, where $\Pi^l_n$ denotes the set of $l$-partitions of $n$ and $ \Irr(\Q({\bf q})\mathcal{H}^{\bf q}_{n})$ denotes the set of irreducible characters of  $\Q({\bf q})\mathcal{H}^{\bf q}_{n}$.

There exists a canonical symmetrising trace \index{$\tau$}$\tau$ on $\mathcal{H}^{\bf q}_{n}$ in the sense of Brou\'e--Malle--Michel \cite{BMM}. This trace was defined by Bremke and Malle \cite{BreMa} over a field, and then it was shown to be a symmetrising trace over $R$ by Malle and Mathas in \cite{MaMa}\footnote{The extra condition needed for the trace to be canonical is supposed to be settled by \cite[Theorem 5.2]{GIM}.}. We have
$$ \tau = \sum_{\ulambda \in \Pi^l_n} \frac{1}{s_\ulambda} \chi^{\ulambda},$$
where \index{$s_\ulambda $} $s_\ulambda \in R$ is the \emph{Schur element} of \index{$\chi^\ulambda $} $\chi^\ulambda$ with respect to $\tau$.

Two independent descriptions of the Schur elements of $\mathcal{H}^{\bf q}_{n}$  have been given by Geck--Iancu--Malle  \cite{GIM} and Mathas \cite{Mat}. In both articles, the Schur elements are given as fractions in $\Q({\bf q})$. However, since the Schur elements belong to the Laurent polynomial ring $R$, we know that the denominator  always divides the numerator. In \cite{CJ1} we have given a cancellation-free formula for these Schur elements, that is, we have explicitly described their irreducible factors in $R$. This formula uses the CJ-hook lengths and, following our work in \cite{CJ2}, it can be read as follows:

\begin{Th}\label{canfreeform} Let $\ulambda=(\lambda^{1},\lambda^{2},\ldots,\lambda^{l}) \in \Pi^l_n$, and set  $X:=X^{1,{\boldsymbol{0}}}(\ulambda)$.   The Schur element $s_{\ulambda}$ is given by
$$s_{\ulambda}=(-1)^{n(l-1)}q^{-N(\bar{\lambda})}(q-1)^{-n}  \prod_{(a,b,i,j) \in \mathcal{H}^{CJ}(X)}  (q^{a-b}Q_iQ_j^{-1}-1)$$
where $\bar{\lambda}$ is the partition of $n$ obtained by reordering all the numbers in $\ulambda$
and $N(\bar{\lambda}):=\sum_{i \geq 1}  (i-1)\bar{\lambda}_i.$
\end{Th}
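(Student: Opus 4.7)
The plan is to deduce this statement as a translation of the cancellation-free formula already obtained in \cite{CJ1}, using the abacus/symbol framework developed in \cite{CJ2}. The Schur element $s_{\ulambda}$ itself lives in the Laurent polynomial ring $R$, and three equivalent rational expressions for it are available from \cite{GIM}, \cite{Mat} and \cite{CJ1}; what is new here is how the irreducible factors are indexed combinatorially. So the work is not to recompute $s_{\ulambda}$ from the symmetrising trace, but to match the right-hand side with the formula from \cite{CJ1}.

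The first step is to recall the formula of \cite{CJ1}: for an $l$-partition $\ulambda=(\lambda^1,\ldots,\lambda^l)$, the Schur element is given as an explicit product $\prod (q^{h}Q_i Q_j^{-1}-1)$ over a combinatorial set of data attached to pairs of components $(\lambda^i,\lambda^j)$, together with the prefactor $(-1)^{n(l-1)}q^{-N(\bar\lambda)}(q-1)^{-n}$. Since this prefactor does not depend on the CJ-hook reformulation, it can be read off directly and this matches the statement. The real content is therefore the identification of the indexing set with $\mathcal{H}^{CJ}(X^{1,\boldsymbol{0}}(\ulambda))$.

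The second step is to perform this identification. Each datum appearing in the $\prod$ of \cite{CJ1} is, after the abacus translation of \cite{CJ2}, a quadruple $(a,b,i,j)$ with $a\in X_i^{1,\boldsymbol{0}}(\ulambda)$, $b\in\Gamma_j$, and the defining inequality $\sharp\{\gamma\in\Gamma_i\mid a>\gamma\}>\sharp\{\gamma\in\Gamma_j\mid b>\gamma\}$; this is precisely the CJ-hook condition recalled in Section 2. One checks that the factor produced by this quadruple is $q^{a-b}Q_iQ_j^{-1}-1$ (the exponent of $q$ comes from the hook length $a-b$, the $Q$-part from the runners involved), so the two products agree factor by factor.

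The third and final step is to verify that the choice of charged symbol does not matter: replacing $X^{1,\boldsymbol{0}}(\ulambda)$ by any $X^{1,\bf s}(\ulambda)$ yields a bijection on CJ-hooks via (\ref{CJbij}), and by (\ref{CJhlbij}) the hook length $a-b$ is changed by $s_i-s_j$, which is exactly compensated by the way $q^{a-b}Q_iQ_j^{-1}-1$ transforms after the substitution. The main obstacle in writing this out in full would be purely bookkeeping: checking that the abacus description of the pairs from \cite{CJ1} matches, runner by runner, the CJ-hook condition, and that signs and $q$-powers coming from the comparison between $X^{1,\boldsymbol{0}}$ and the original formulation align. Since the detailed verification was essentially carried out in \cite{CJ1,CJ2}, the proof can be compressed to stating the correspondence and referring to those papers for the combinatorial matching.
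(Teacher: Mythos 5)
Your proposal is correct and matches the paper's treatment: the theorem is not proved from scratch but is presented as the cancellation-free formula of \cite{CJ1} rewritten via the abacus/CJ-hook dictionary of \cite{CJ2}, which is exactly your first two steps (your third step on charge independence is harmless but not needed, since the statement fixes $X=X^{1,\boldsymbol{0}}(\ulambda)$).
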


Note that the term $(q-1)^{-n}$ can be cancelled-out by the terms corresponding to the classical hook lengths $(a,b,i,i)$.
Following  Corollary \ref{bij},  for $X:=X^{1,{\boldsymbol{0}}}(\ulambda)$, we have the same number of 
  elements in  $\mathcal{H}^{BGO}(X)$ and $ \mathcal{H}^{CJ}(X)$.  In addition, by Proposition \ref{inj}, 
   the expressions  $\prod_{(a,b,i,j)\in\mathcal{H}^{BGO}(X)}  (q^{a-b}Q_iQ_j^{-1}-1)$ and $\prod_{(a,b,i,j)\in \mathcal{H}^{CJ}(X)}  (q^{a-b}Q_iQ_j^{-1}-1)$
    are equal up to multiplication by products of invertible elements in $R$.  We can thus rewrite the above formula in terms of BGO-hook lengths:

\begin{Th}\label{canfreeformBGO} Let $\ulambda=(\lambda^{1},\lambda^{2},\ldots,\lambda^{l}) \in \Pi^l_n$, and set  $X:=X^{1,{\boldsymbol{0}}}(\ulambda)$.   Then there exists  $u_\ulambda \in R^\times$ such that
$$s_{\ulambda}=u_\ulambda (q-1)^{-n}  \prod_{(a,b,i,j) \in \mathcal{H}^{BGO}(X)}  (q^{a-b}Q_iQ_j^{-1}-1).$$
\end{Th}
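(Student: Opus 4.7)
The plan is to deduce the BGO formula directly from Theorem \ref{canfreeform} by rewriting the CJ-indexed product as a BGO-indexed product using the map $f$ of Proposition \ref{inj}. The crucial observation that unlocks everything is that $X = X^{1,\boldsymbol{0}}(\ulambda)$ has, by its very construction, multicharge $(m_{\ulambda},\ldots,m_{\ulambda})$. Hence Corollary \ref{bij} applies and $f : \mathcal{H}^{CJ}(X) \to \mathcal{H}^{BGO}(X)$ is in fact a \emph{bijection}. This is what makes a reindexing of the product possible; over a general symbol, where $f$ is only injective, no such rewriting could hold.

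I would then split the CJ-product according to this bijection into two pieces:
$$\prod_{(a,b,i,j) \in \mathcal{H}^{CJ}(X)} (q^{a-b}Q_iQ_j^{-1}-1) = \Big(\prod_{h \in \mathcal{H}^{CJ}(X) \cap \mathcal{H}^{BGO}(X)} \cdots\Big) \cdot \Big(\prod_{h \in \mathcal{H}^{CJ}(X) \setminus \mathcal{H}^{BGO}(X)} \cdots \Big).$$
On the intersection $\mathcal{H}^{CJ}(X) \cap \mathcal{H}^{BGO}(X)$, the map $f$ acts as the identity (by Proposition \ref{inj}), so those factors contribute exactly the corresponding factors in the BGO-product.

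For a hook $h=(a,b,i,j) \in \mathcal{H}^{CJ}(X) \setminus \mathcal{H}^{BGO}(X)$, Proposition \ref{inj} yields $f(h) = (a',b',j,i)$ with $a'-b' = |a-b| = b-a$ (since $h$ fails to be a BGO-hook means either $a<b$, or $a=b$ with $i<j$; in both cases $b-a \geq 0$). Setting $x := q^{a-b}Q_iQ_j^{-1}$ so that $q^{a'-b'}Q_jQ_i^{-1} = x^{-1}$, the ratio of the CJ-factor at $h$ to the BGO-factor at $f(h)$ is
$$\frac{q^{a-b}Q_iQ_j^{-1}-1}{q^{a'-b'}Q_jQ_i^{-1}-1} \;=\; \frac{x-1}{x^{-1}-1} \;=\; -x \;=\; -q^{a-b}Q_iQ_j^{-1},$$
which is a monomial, hence a unit in $R = \Z[{\bf q},{\bf q}^{-1}]$. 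Since $f$ is a bijection, this lets me replace the second product by the corresponding product of BGO-factors over $\mathcal{H}^{BGO}(X) \setminus \mathcal{H}^{CJ}(X)$ at the cost of multiplying by the product of these units.

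Combining both pieces reassembles the full BGO-product. Multiplying by the unit $(-1)^{n(l-1)} q^{-N(\bar{\lambda})}$ from Theorem \ref{canfreeform} and absorbing all the monomial correction factors collected above into a single unit $u_\ulambda \in R^\times$ yields the claimed expression. There is no serious obstacle: the entire combinatorial content is packaged in Proposition \ref{inj} and Corollary \ref{bij}, and the only real computation is the two-line identity $\frac{x-1}{x^{-1}-1} = -x$.
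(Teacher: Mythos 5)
Your proof is correct and follows exactly the route the paper intends: the paper derives Theorem \ref{canfreeformBGO} from Theorem \ref{canfreeform} precisely by invoking Proposition \ref{inj} and Corollary \ref{bij} (noting that $X^{1,\boldsymbol{0}}(\ulambda)$ has equal multicharge, so $f$ is a bijection), and your computation $\frac{x-1}{x^{-1}-1}=-x$ correctly fills in the unit bookkeeping that the paper leaves implicit.
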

From now on, we set $\tilde{s}_\ulambda:=(q-1)^n s_\ulambda$.

\subsection{Specialised Ariki--Koike algebras}

Let  $L$ be a field and let $\theta : R \to L$ 
   be a specialisation of $R$.  Set $\xi_i:=\theta (Q_i)$ for $1\leq i \leq l$,  $u:=\theta (q)$ and ${\bf u}:=(\xi_1,\,\ldots,\,\xi_{l}\,;\,u)$. 
 Assume that the algebra  $L\mathcal{H}^{\bf u}_{n}$ is split.
 By some general results on symmetric algebras (\cite[Theorem 7.5.11]{GePf}, \cite[Proposition 4.4]{GR})
 we have that the block of $L\mathcal{H}^{\bf u}_{n}$ containing $\chi^\ulambda$ is a $1 \times 1$ identity matrix if and only if $\theta(s_\ulambda) \neq 0$. In particular,
  the algebra $L\mathcal{H}^{\bf u}_{n}$ is semisimple if and only if $\theta(s_\ulambda) \neq 0$ for all $\ulambda \in \Pi_n^l$  (\cite[Theorem 7.4.7]{GePf}).  In \cite[Theorem 4.2]{CJ1}, we have shown that this criterion
  in combination with the form of the Schur elements given by Theorem \ref{canfreeform} allows us to recover Ariki's semisimplicity criterion for Ariki--Koike algebras, which is the following \cite[Main Theorem]{Arsem}:
 
 \begin{Th}\label{Ariki's semisimplicity} The algebra
 $L \mathcal{H}^{\bf u}_n$ is  semisimple if and only if 
 $$\prod_{1\leq i \leq n}(1+u+\cdots+u^{i-1}) \prod_{1 \leq a <b \leq l}\,\,\prod_{-n<h<n}(u^h\xi_a-\xi_b) \neq 0.$$
 \end{Th}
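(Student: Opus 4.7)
The plan is to combine the general semisimplicity criterion for symmetric algebras---by \cite[Theorem 7.4.7]{GePf}, $L\mathcal{H}^{\bf u}_n$ is semisimple if and only if $\theta(s_\ulambda)\neq 0$ for every $\ulambda\in\Pi^l_n$---with the cancellation-free Schur element formula of Theorem \ref{canfreeform}. The task reduces to showing that the displayed product $P$ is nonzero if and only if $\theta(s_\ulambda)\neq 0$ for every $\ulambda\in \Pi^l_n$.

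For the $(\Leftarrow)$ direction I would fix $\ulambda\vdash n$ and split $\mathcal{H}^{CJ}(X^{1,\boldsymbol{0}}(\ulambda))$ into CJ-hooks with $i=j$ and with $i\neq j$. The $i=j$ CJ-hooks coincide with the classical hooks of $\lambda^i$, totalling $n=\sum_i|\lambda^i|$ across $i$; each contributes a factor $q^h-1=(q-1)\cdot[h]_q$, where $[h]_q:=1+q+\cdots+q^{h-1}$ and $1\le h\le n$, so together they absorb the $(q-1)^{-n}$ in the Schur element formula and leave a product of quantum integers $[h]_u$ upon specialisation, each of which is a factor in the first product of $P$. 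The $i\neq j$ CJ-hooks give factors $q^{a-b}Q_iQ_j^{-1}-1$ with $|a-b|\le n-1$, an elementary abacus bound for a multipartition of $n$. After clearing the unit $\xi_j$, each such specialised factor equals $u^{h'}\xi_a-\xi_b$ for some $a<b$ and $|h'|<n$, hence appears in the second product of $P$. So $P\neq 0$ implies $\theta(s_\ulambda)\neq 0$ for every $\ulambda$.

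For the $(\Rightarrow)$ direction I would exhibit, for each vanishing factor of $P$, an explicit multipartition of $n$ whose Schur element specialises to zero. If $[m]_u=0$ with $1\le m\le n$, the choice $\ulambda=((n),\boldsymbol{\emptyset},\ldots,\boldsymbol{\emptyset})$ works, since the partition $(n)$ has hook lengths $\{1,\ldots,n\}$ and so $[m]_u$ appears as a factor of $\theta(s_\ulambda)$. If $u^h\xi_a-\xi_b=0$ with $1\le a<b\le l$ and $-n<h<n$, I would take $\ulambda$ whose single nonempty component is $(n)$, placed in position $a$ if $h\ge 0$ and in position $b$ if $h<0$; a direct abacus computation (reading off the $\beta$-sets $X_a=(0,n+1)$, $X_b=(0,1)$, or vice versa) shows that $X^{1,\boldsymbol{0}}(\ulambda)$ carries a CJ-hook producing precisely the factor $u^h\xi_a-\xi_b$ up to multiplication by a nonzero constant in $\{\xi_a,\xi_b\}^{\pm 1}$.

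The main obstacle---and the point at which the bound $-n<h<n$ in Ariki's criterion becomes visible---is the abacus analysis in the forward direction: one must verify the hook-length bound $|a-b|\le n-1$ for cross-runner CJ-hooks of any multipartition of $n$ and, simultaneously, confirm that the simple construction above does produce CJ-hooks realising every value of $h$ with $|h|<n$. This two-sided matching is what makes Ariki's product coincide with the set of Schur-element factors term for term.
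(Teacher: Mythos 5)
Your proposal is correct and follows exactly the route the paper indicates for this theorem: combine the symmetric-algebra semisimplicity criterion of \cite[Theorem 7.4.7]{GePf} with the cancellation-free Schur element formula of Theorem \ref{canfreeform}, which is precisely the derivation the authors attribute to \cite[Theorem 4.2]{CJ1}. The two points you flag as needing verification (the bound $|a-b|\leq n-1$ for cross-runner CJ-hooks of a multipartition of $n$, and the realisation of every $h$ with $|h|<n$ by a multipartition with a single row $(n)$) both check out, so the sketch is sound.
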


In any case, we have a well-defined decomposition matrix \index{$D_\theta$} 
$D_\theta=([V^{\ulambda}:M])_{\ulambda\in \Pi^l_n,\,M\in \text{Irr} ( L  \mathcal{H}_n^{\bf u})}$.   There is a useful result 
 by Dipper and Mathas \cite{DiMa} which allows us to restrict ourselves to a very specific situation in order to study $D_\theta$. In order to do this, we set $\mathcal{U}:=\left\{1,\ldots ,{l}\right\}$ and we assume that we have a partition 
$$\mathcal{U}=\mathcal{U}_1 \sqcup   \mathcal{U}_2 \sqcup \ldots  \sqcup \mathcal{U}_t $$
which is the finest with respect to the property
\begin{equation}\label{theproperty}
\prod_{1\leq \alpha<\beta\leq t}\,\, \prod_{(a,b)\in \mathcal{U}_{\alpha}\times \mathcal{U}_{\beta}}\,\, \prod_{-n<h<n} (u^h \xi_a-\xi_b) \neq 0.
\end{equation}
 For $i=1,\ldots,t$, write $\mathcal{U}_i:=\{a_{i,1},\ldots,a_{i,m_i}\}$ with $a_{i,1} < \cdots < a_{i,m_i}$. Whenever ${\bf f}=(f_1,\ldots,f_{l})$ is a sequence  indexed by $\mathcal{U}$, we will write ${\bf f}[i]$ for the sequence $(f_{a_{i,1}},\ldots,f_{a_{i,m_i}})$.

 \begin{Th}\label{Nmorita}
{\bf (The  Morita equivalence of Dipper and Mathas)}  For $i=1,\ldots,t$, we set ${\bf u_i}:=((\xi_1,\,\ldots,\,\xi_{l})[i];\,u)$. 
The algebra $L\mathcal{H}^{\bf u}_{n}$ is Morita equivalent to the algebra
$$\bigoplus_{n_1,\ldots ,n_t\geq 0 \atop n_1+\ldots +n_t=n}  L \mathcal{H}^{\bf u_1}_{n_1}\otimes_L   L\mathcal{H}^{\bf u_2}_{n_2}\otimes_L  \ldots \otimes_L    L\mathcal{H}_{n_t}^{\bf u_t}.    $$
\end{Th}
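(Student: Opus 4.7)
The plan is to reproduce the original strategy of Dipper--Mathas \cite{DiMa}: build a family of central idempotents of $L\mathcal{H}^{\bf u}_n$ indexed by compositions $(n_1,\ldots,n_t)$ of $n$, and identify each corresponding summand with a tensor product of smaller Ariki--Koike algebras. The whole argument rests on the Jucys--Murphy elements $L_1 := T_0$ and $L_{i+1} := q^{-1} T_i L_i T_i$ for $i \geq 1$; these pairwise commute, and in each irreducible module their eigenvalues take the form $u^{c}\xi_{a}$ with $c$ an integer content (satisfying $|c|<n$) and $a\in\{1,\ldots,l\}$ indicating which parameter.

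First, I would exploit the separation assumption \eqref{theproperty}: for each integer $c$ with $|c|<n$, the polynomials $P_\alpha(X) := \prod_{a \in \mathcal{U}_\alpha}(X - u^{c}\xi_a)$ for $\alpha = 1,\ldots,t$ are pairwise coprime in $L[X]$. By Bezout, inside each commutative subalgebra $L[L_i]$ this yields orthogonal idempotents $f_{i,\alpha}$ projecting onto the generalised eigenspace where $L_i$ acts by some $u^{c}\xi_a$ with $a \in \mathcal{U}_\alpha$. For any colour function $\mathbf{c}:\{1,\ldots,n\} \to \{1,\ldots,t\}$, the product $f_{\mathbf{c}} := \prod_{i=1}^n f_{i,\mathbf{c}(i)}$ is a well-defined idempotent, and
\[
E_{(n_1,\ldots,n_t)} \;:=\; \sum_{|\mathbf{c}^{-1}(\alpha)| = n_\alpha\; \forall \alpha} f_{\mathbf{c}}
\]
is a central idempotent. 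These $E_{(n_1,\ldots,n_t)}$ are pairwise orthogonal and sum to $1$, giving a ring decomposition of $L\mathcal{H}^{\bf u}_n$ indexed by compositions of $n$ into $t$ parts.

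The harder step is to identify each summand $E_{(n_1,\ldots,n_t)}L\mathcal{H}^{\bf u}_n$ as being Morita equivalent to $\bigotimes_\alpha L\mathcal{H}^{\bf u_\alpha}_{n_\alpha}$. Taking the ``standard'' colour function $\mathbf{c}_0$ that assigns colour $\alpha$ to the positions $n_1+\cdots+n_{\alpha-1}+1, \ldots, n_1+\cdots+n_\alpha$, I would analyse the corner algebra $f_{\mathbf{c}_0} L\mathcal{H}^{\bf u}_n f_{\mathbf{c}_0}$. The quadratic relation for $T_0$ forces $f_{\mathbf{c}_0} T_0 f_{\mathbf{c}_0}$ to satisfy $\prod_{a \in \mathcal{U}_1}(T_0 - \xi_a)\, f_{\mathbf{c}_0} = 0$, which is precisely the $T_0$-relation for $L\mathcal{H}^{\bf u_1}_{n_1}$. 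Analogous relations for conjugated copies of $T_0$ supported on each block, together with the braid relations, produce an algebra map
\[
\Phi:\;\bigotimes_{\alpha=1}^t L\mathcal{H}^{\bf u_\alpha}_{n_\alpha} \longrightarrow f_{\mathbf{c}_0} L\mathcal{H}^{\bf u}_n f_{\mathbf{c}_0}.
\]
Assumption~\eqref{theproperty} is essential here: when a braid generator $T_i$ straddles two colour classes, the relevant denominators $u^h \xi_a - \xi_b$ are invertible in $L$, so the needed intertwiners are well defined.

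The main obstacle is showing that $\Phi$ is an isomorphism and that the projective module $L\mathcal{H}^{\bf u}_n \cdot f_{\mathbf{c}_0}$ is a progenerator for $E_{(n_1,\ldots,n_t)}L\mathcal{H}^{\bf u}_n$. Injectivity of $\Phi$ follows from a dimension count (compare against the generic semisimple case). For surjectivity and the progenerator condition, one uses the action of the braid generators $T_i$ to permute colour functions with the same fibre sizes: any $f_{\mathbf{c}}$ appearing in $E_{(n_1,\ldots,n_t)}$ is conjugate to $f_{\mathbf{c}_0}$ via an invertible element (built from $T_i$'s, with denominators controlled again by \eqref{theproperty}). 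This transitivity guarantees that every simple module in the block appears as a direct summand of $L\mathcal{H}^{\bf u}_n \cdot f_{\mathbf{c}_0}$, which together with the algebra isomorphism on corners yields the claimed Morita equivalence. In the semisimple case this is immediate from Theorem~\ref{Ariki's semisimplicity}; the non-semisimple case is obtained by base-change from the generic algebra $R\mathcal{H}^{\bf q}_n$ and a comparison of Grothendieck groups.
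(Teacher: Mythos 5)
The paper does not prove this statement at all: it is quoted verbatim from Dipper and Mathas \cite{DiMa}, and the subsequent Remark points to Rostam \cite{Salim} for an explicit algebra isomorphism that implies it. So there is no ``paper proof'' to match; what you have written is an attempted reconstruction of the original Dipper--Mathas argument, and it should be judged on its own terms.

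Your overall architecture is the right one (and is in fact closer to the modern idempotent/weight-space formulation used by Rostam than to the original paper, which works with a progenerator $\bigoplus_{\mathbf a} \mathcal{H}v_{\mathbf a}$ built from products of Jucys--Murphy elements and the Dipper--James--Mathas standard basis, rather than with generalised-eigenspace idempotents). The genuine gaps are in the two places where all the work lies. First, the assertion that $\Phi:\bigotimes_\alpha L\mathcal{H}^{\mathbf u_\alpha}_{n_\alpha}\to f_{\mathbf c_0}L\mathcal{H}^{\mathbf u}_n f_{\mathbf c_0}$ is an isomorphism \emph{is} essentially the theorem: you need to exhibit the generators $T_0^{(\alpha)}$ of the higher tensor factors inside the corner algebra (this requires conjugating $T_0$ by intertwiners whose invertibility on the relevant eigenspaces must be checked against \eqref{theproperty}), verify all defining relations there, and then compute the rank of $f_{\mathbf c_0}L\mathcal{H}^{\mathbf u}_n f_{\mathbf c_0}$ via a basis theorem; ``compare against the generic semisimple case'' does not do this, because the idempotents $f_{\mathbf c}$ do not lift to the generic algebra (their very existence depends on the specialised separation condition). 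Second, the closing sentence --- deducing the non-semisimple case ``by base-change from the generic algebra and a comparison of Grothendieck groups'' --- is not a valid step: Morita equivalence is not detected by Grothendieck groups, and it does not descend from a semisimple generic fibre to a non-semisimple specialisation. Dipper and Mathas avoid exactly this trap by proving the progenerator and endomorphism-ring statements integrally, over a ring where the separation condition \eqref{theproperty} already holds. As it stands your text is a correct roadmap but not a proof; for the purposes of this paper the honest course is to cite \cite{DiMa} (or \cite{Salim}), as the authors do.
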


\begin{Rem}
Recently, Rostam \cite{Salim} has produced an explicit isomorphism between $L\mathcal{H}^{\bf u}_{n}$ and
$$\bigoplus_{n_1,\ldots ,n_t\geq 0 \atop n_1+\ldots +n_t=n} {\rm Mat}_{\frac{n!}{n_1!\dots n_t!}} \left(L \mathcal{H}^{\bf u_1}_{n_1}\otimes_L   L\mathcal{H}^{\bf u_2}_{n_2}\otimes_L  \ldots \otimes_L    L\mathcal{H}_{n_t}^{\bf u_t}\right),    $$
which implies the Morita equivalence of Dipper and Mathas.
\end{Rem}

Therefore, to study the representation theory of Ariki-Koike algebras (in particular the blocks, the decomposition numbers etc.), we can restrict ourselves to the study of specialisations of the form 
 where $\xi_i=u^{s_i}$ for some $s_i \in \N$ (we do not need negative powers, because we can always add the same integer to all the $s_i$'s and the algebra does not change). This also covers the case of cyclotomic Ariki--Koike algebras, as studied in \cite[\S 3.4]{CJ2}. However, note that without the restriction imposed by property \eqref{theproperty}, the algebra $L\mathcal{H}^{\bf u}_{n}$ is not necessarily semisimple  (whereas cyclotomic Ariki--Koike algebras always are).

More generally, let $k \in \N$ and ${\bf s} \in \mathbb{N}^l$. Let $\theta_{k,{\bf s}}:R \rightarrow \Q(q)$ be a specialisation such that $\theta_{k,{\bf s}}(q)=q^k$ and $\theta_{k,{\bf s}}(Q_i)=q^{s_i}$ for all $i=1,\ldots,l$. 
Set $\mathfrak{q}:=\{q^{s_1},\ldots,q^{s_l};q^k\}$. 
Then, for all $\ulambda \in \Pi_n^l$,
\begin{equation}
\theta_{k,{\bf s}}(\tilde{s}_{\ulambda})=(-1)^{n(l-1)}q^{-kN(\bar{\lambda})}  \prod_{h \in \mathcal{HL}^{CJ}_{k,{\bf s}} (X^{1,\boldsymbol{0}}(\ulambda))}  (q^h-1).
\end{equation}
Therefore, the algebra $\Q(q)\mathcal{H}^{\mathfrak{q}}_{n}$ is semisimple if and only if
$0 \notin \mathcal{HL}^{CJ}_{k,{\bf s}} (X^{1,\boldsymbol{0}}(\ulambda))$ for all $\ulambda \in \Pi_n^l$.
 More specifically, by \cite[Theorem 7.2.6]{GePf}, the simple module of character $\chi^\ulambda$ remains irreducible and projective after specialisation if and only if $0 \notin \mathcal{HL}^{CJ}_{k,{\bf s}} (X^{1,\boldsymbol{0}}(\ulambda))$. Moreover, it is known (see \cite[5.5.3]{GJ}) that the valuation of the above Laurent polynomial is equal to $-a_{{\bf s},k}(\ulambda)$ (see \S \ref{afonc}). We thus have
 \begin{equation}\label{theta-schur}
 \theta_{k,{\bf s}}(\tilde{s}_{\ulambda})= \pm \, q^{-a_{{\bf s},k}(\ulambda)} \prod_{h \in |\mathcal{HL}^{CJ}_{k,{\bf s}} (X^{1,\boldsymbol{0}}(\ulambda))|}  (q^h-1)
 \end{equation}
where the sign is equal to $(-1)^{n(l-1)}$ times the number of negative elements in $\mathcal{HL}^{CJ}_{k,{\bf s}} (X^{1,\boldsymbol{0}}(\ulambda))$.

If now we take $q$ to be a primitive $e$-th root of unity, it follows from \cite{JL} that the definition of $(e,{\bf s})$-core naturally generalises the following two facts from type $A$ to all other types:\smallbreak
\begin{itemize}
\item the block containing $\chi^\ulambda$ is a $1 \times 1$ identity matrix if and only $\ulambda$ is an $(e,{\bf s})$-core \cite[Corollary 4.1]{JL};\smallbreak
\item two characters $\chi^\ulambda$ and $\chi^\umu$ are in the same block if and only if $\ulambda$ and $\umu$ have the same  $(e,{\bf s})$-core \cite[Corollary 4.4]{JL}.
\end{itemize}

\subsection{Factorisation of Schur elements in type $A$}
    One of the main results of this paper is the following formula that makes a connection between the Schur elements of a partition, its core and its quotient. It is derived directly from Formula \eqref{theta-schur}, using Corollary \ref{particore} and Proposition \ref{a-function}.
    
    \begin{Th}\label{mainTypeA}
Let $e,n \in \mathbb{Z}_{>0}$ and let $\lambda$ be a partition of $n$. 
Let $\lambda^\circ$ be the $e$-core of $\lambda$ and $\ulambda$ its $e$-quotient.
Let ${\bf s}=(s_1,\ldots,s_e)$ be the multicharge of the $e$-symbol of $\lambda$ and $\tilde{\bf s}=(es_1,es_2+1,\ldots,es_e+(e-1))$. Then
$$\tilde{s}_\lambda = \pm \,\tilde{s}_{\lambda^\circ} \cdot \theta_{e,\tilde{\bf s}}(\tilde{s}_{\ulambda}).$$
    \end{Th}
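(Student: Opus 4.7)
The plan is to assemble the identity directly from three ingredients already established: the cancellation–free formula of Theorem \ref{canfreeform}, the hook–length decomposition of Corollary \ref{particore}, and the additivity of the $a$-function in Proposition \ref{a-function}. For $l=1$ the formula of Theorem \ref{canfreeform} simplifies considerably: every hook has the form $(a,b,1,1)$, so $Q_iQ_j^{-1}=1$ and $(-1)^{n(l-1)}=1$, while $\mathcal{H}^{CJ}(X^{1,\boldsymbol{0}}(\lambda))=\mathcal{H}^{BGO}(X^{1,\boldsymbol{0}}(\lambda))$ and the corresponding hook lengths are precisely the classical hook lengths $\mathcal{HL}(\lambda)$, all positive. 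Hence
\[
\tilde{s}_\lambda \;=\; q^{-N(\lambda)}\prod_{h\in\mathcal{HL}(\lambda)}(q^h-1),
\qquad
\tilde{s}_{\lambda^\circ}\;=\;q^{-N(\lambda^\circ)}\prod_{h\in\mathcal{HL}(\lambda^\circ)}(q^h-1).
\]

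Next, I would apply Corollary \ref{particore}, which gives, as multisets,
\[
\mathcal{HL}(\lambda)=\mathcal{HL}(\lambda^\circ)\cup\bigl|\mathcal{HL}^{CJ}_{e,\tilde{\bf s}}(X^{1,\boldsymbol{0}}(\ulambda))\bigr|.
\]
This immediately factors the big hook product of $\tilde{s}_\lambda$ into a product over $\mathcal{HL}(\lambda^\circ)$ and a product over $|\mathcal{HL}^{CJ}_{e,\tilde{\bf s}}(X^{1,\boldsymbol{0}}(\ulambda))|$. The first of these is exactly the hook product appearing in $\tilde{s}_{\lambda^\circ}$; the second is, up to a $q$-power and a sign, the specialised quantity $\theta_{e,\tilde{\bf s}}(\tilde{s}_{\ulambda})$ by Formula \eqref{theta-schur}. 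More precisely,
\[
\theta_{e,\tilde{\bf s}}(\tilde{s}_{\ulambda}) \;=\; \pm\, q^{-a_{\tilde{\bf s},e}(\ulambda)}\prod_{h\in|\mathcal{HL}^{CJ}_{e,\tilde{\bf s}}(X^{1,\boldsymbol{0}}(\ulambda))|}(q^h-1).
\]

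It then remains to reconcile the powers of $q$. For a single partition, unwinding the definition of $\mathfrak{a}$ on $X^{1,\boldsymbol{0}}(\lambda)$ shows $a(\lambda)=N(\lambda)$, so Proposition \ref{a-function} reads $N(\lambda)=N(\lambda^\circ)+a_{\tilde{\bf s},e}(\ulambda)$. Substituting this into the factorisation of $\tilde{s}_\lambda$ obtained above, the overall $q^{-N(\lambda)}$ splits exactly into $q^{-N(\lambda^\circ)}$ (absorbed into $\tilde{s}_{\lambda^\circ}$) and $q^{-a_{\tilde{\bf s},e}(\ulambda)}$ (absorbed into $\theta_{e,\tilde{\bf s}}(\tilde{s}_{\ulambda})$), yielding the claimed equality $\tilde{s}_\lambda=\pm\,\tilde{s}_{\lambda^\circ}\cdot \theta_{e,\tilde{\bf s}}(\tilde{s}_{\ulambda})$.

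Since every ingredient is already available, there is no genuine obstacle, only bookkeeping. The most delicate point is simply checking that for $l=1$ the various objects match up as described, namely that the CJ-hook lengths for $X^{1,\boldsymbol{0}}(\lambda)$ coincide with the classical hook lengths and that $a(\lambda)=N(\lambda)$; the $\pm$ sign in the statement exactly absorbs the sign contribution from the negative elements of $\mathcal{HL}^{CJ}_{e,\tilde{\bf s}}(X^{1,\boldsymbol{0}}(\ulambda))$ recorded in \eqref{theta-schur}, so no further analysis is required.
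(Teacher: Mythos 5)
Your proposal is correct and follows essentially the same route as the paper, which derives the theorem directly from Formula \eqref{theta-schur} combined with Corollary \ref{particore} and Proposition \ref{a-function}; your added checks (that for $l=1$ the CJ/BGO hook lengths of $X(\lambda)$ are the classical ones and that $a(\lambda)=N(\lambda)$, so the $q$-powers match) are exactly the bookkeeping the paper leaves implicit.
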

    
From the above theorem, one can easily deduce several block-theoretic connections between the partition $\lambda$, its $e$-core $\lambda^\circ$ and its $e$-quotient $\ulambda$. For example, if $\lambda$ is an $e'$-core for some $e' \neq e$, then $\lambda^\circ$ is also an $e'$-core (this was first proved in \cite{Ol} for $e'$ coprime to $e$, and the general case was proved in \cite{GN}) and $\ulambda$ is an $(e',\tilde{\bf s})$-core.
    
\subsection{Divisibility of Schur elements in other types} For $l > 1$, we have Proposition \ref{prop2.3}, which implies the following at the level of Schur elements:

\begin{Prop}\label{unmaintheorem} Let $e,l,n \in \mathbb{Z}_{>0}$ and ${\bf s}=(s_1,\ldots,s_l) \in \mathbb{N}^l$.
Let $\ulambda$ be an $l$-partition and let $\ulambda^\circ$ be the $(e,{\bf s})$-core of $\ulambda$.
Then $\theta_{1,{\bf s}^\circ}(\tilde{s}_{\ulambda^\circ})$ divides 
\begin{equation}\label{lastelement}
\theta_{1,{\bf s}}(\tilde{s}_{\ulambda})\,\, \cdot \prod_{1 \leq i <j \leq l} \prod_{h=1}^{|s_i-s_j|-1} (q^h-1).
\end{equation}
\end{Prop}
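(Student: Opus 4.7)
The plan is to use formula \eqref{theta-schur} to write both $\theta_{1,{\bf s}}(\tilde{s}_{\ulambda})$ and $\theta_{1,{\bf s}^\circ}(\tilde{s}_{\ulambda^\circ})$, up to a unit in $R$, as products of factors $(q^h-1)$ indexed by CJ-hook-length multisets, and then to read off the required divisibility from the multiset inclusion provided by Proposition \ref{prop2.3}. Concretely, setting $X := X^{1,{\bf s}}(\ulambda)$, formula \eqref{theta-schur} gives
\[
\theta_{1,{\bf s}}(\tilde{s}_{\ulambda}) \,\sim\, \prod_{h \in |\mathcal{HL}^{CJ}(X)|}(q^h-1) \quad\text{and}\quad \theta_{1,{\bf s}^\circ}(\tilde{s}_{\ulambda^\circ}) \,\sim\, \prod_{h \in |\mathcal{HL}^{CJ}(X^\circ)|}(q^h-1),
\]
up to a unit in $R$, where the replacement of $|\mathcal{HL}^{CJ}(X^{1,{\bf s}^\circ}(\ulambda^\circ))|$ by $|\mathcal{HL}^{CJ}(X^\circ)|$ is justified by the invariance of the CJ-hook-length multiset under a uniform shift of the multicharge, which is immediate from \eqref{CJhlbij}.

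If $0 \in |\mathcal{HL}^{CJ}(X)|$ then the right-hand side of the claimed divisibility already vanishes and the statement is vacuous, so one may assume otherwise. Proposition \ref{prop2.3} then yields the multiset inclusion
\[
|\mathcal{HL}^{CJ}(X^\circ)| \cup \mathcal{HL}^{BGO}(X^{1,{\bf s}^\circ}(\uemptyset))_{\neq 0} \subseteq |\mathcal{HL}^{CJ}(X)| \cup \mathcal{HL}^{BGO}(X^{1,{\bf s}}(\uemptyset))_{\neq 0},
\]
and taking the product of the factors $(q^h-1)$ over each multiset converts this at once into polynomial divisibility in $R$.

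To finish, I would identify the BGO-hook-length contribution of the empty multipartition with the announced correction factor. A direct inspection of the symbol $X^{1,{\bf s}}(\uemptyset)$, whose components are initial segments of $\mathbb{N}$, shows that its non-zero BGO-hook lengths depend solely on the pairwise differences $|s_i-s_j|$, with each pair $(i,j)$ contributing hook lengths in $\{1,2,\dots,|s_i-s_j|-1\}$; an analogous description applies to $X^{1,{\bf s}^\circ}(\uemptyset)$ with $|s_i^\circ-s_j^\circ|$. The main obstacle is the final bookkeeping: after cancelling the empty-multipartition BGO-factor on the left against the analogous factor on the right, one must verify that the residual contribution is accounted for by exactly $\prod_{1\le i<j\le l}\prod_{h=1}^{|s_i-s_j|-1}(q^h-1)$. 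Controlling this surplus forces one to exploit the structure of the Jacon--Lecouvey algorithm (steps (1) and (2) of \S\ref{core}) that transforms ${\bf s}$ into ${\bf s}^\circ$, so as to ensure that the multiplicities of the residual hook lengths are dominated by those in the announced product.
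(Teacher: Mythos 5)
Your reduction is exactly the one the paper intends: express both sides via \eqref{theta-schur}, feed in the multiset inclusion of Proposition \ref{prop2.3}, and absorb the two empty-multipartition terms into the correction factor. The first two steps are fine. The problem is the last step, which you yourself flag as ``the main obstacle'': it is not bookkeeping, and the domination you propose cannot hold as stated. For the pair $(i,j)$ the multiset of non-zero BGO-hook lengths of $X^{1,{\bf s}}(\boldsymbol{\emptyset})$ contains each $h\in\{1,\dots,|s_i-s_j|-1\}$ with multiplicity $|s_i-s_j|-h$, not $1$; and since the core multicharge always lies in the fundamental alcove $s_1^\circ\le\cdots\le s_l^\circ\le s_1^\circ+e$, every $|s_i^\circ-s_j^\circ|$ is at most $e$ no matter how spread out ${\bf s}$ is. Hence for $l=2$ and $|s_1-s_2|\ge e+2$ the residual multiset $\mathcal{HL}^{BGO}(X^{1,{\bf s}}(\boldsymbol{\emptyset}))\setminus\mathcal{HL}^{BGO}(X^{1,{\bf s}^\circ}(\boldsymbol{\emptyset}))$ contains $h=1$ with multiplicity at least $2$, whereas the announced product contains it once; the multiset inclusion you want to ``read off'' is simply false. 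Worse, the corresponding polynomial divisibility also fails: with $|s_1-s_2|=4$, $e=2$, the residual factor is (at least) $(q-1)^2(q^2-1)^2(q^3-1)$, which does not divide $(q-1)(q^2-1)(q^3-1)$.

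Consequently the chain ``$P(A)P(B)\mid P(C)P(D)$, hence $P(A)\mid P(C)\cdot P(D\setminus B)$, hence $P(A)\mid P(C)\cdot(\text{correction})$'' breaks at the final link, and no analysis of the Jacon--Lecouvey recombination of ${\bf s}$ into ${\bf s}^\circ$ can repair it at the level of multisets of exponents: one must compare multiplicities of each irreducible factor $\Phi_d$ (note $\prod_{h=1}^{m-1}(q^h-1)$ is exactly the quotient of the empty-symbol contribution for charge gap $m$ by that for gap $m-1$, so your scheme only closes when every $|s_i-s_j|$ drops by at most one in passing to ${\bf s}^\circ$), and when the gaps contract further one needs extra input about $\mathcal{HL}^{CJ}(X^\circ)$ itself, beyond what Proposition \ref{prop2.3} supplies. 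To be fair, the paper states this proposition with no more justification than the phrase ``at the level of Schur elements,'' so your proposal faithfully reproduces its strategy; but as a proof it is incomplete, and the missing step is precisely where the content lies.
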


Unfortunately, we often have $\theta_{1,{\bf s}}(\tilde{s}_{\ulambda})=0$. In fact, we have $\theta_{1,{\bf s}}(\tilde{s}_{\ulambda}) \neq 0$ if and only if $\ulambda$ is an $(\infty, {\bf s})$-core. For example, for ${\bf s}=0$, this is equivalent to having $\lambda^1=\lambda^2=\cdots=\lambda^l$. Nevertheless, if $\ulambda$ is an $(\infty, {\bf s})$-core, the above proposition implies that, for any $e \in \mathbb{Z}_{>0}$, the Schur element of the $e$-core of $\ulambda$ divides the Schur element of $\ulambda$ (modulo the right-hand term of \eqref{lastelement}).

\printindex

\end{document}